\newcommand \bfG{{\mathbf G}}
\newcommand \bfH{{\mathbf H}}
\newcommand \bfT{{\mathbf T}}
\newcommand \bfP{{\mathbf P}}
\newcommand \bfU{{\mathbf U}}
\newcommand \bfL{{\mathbf L}}
\newcommand \Gal{{\mathrm {Gal}}}
\newcommand \sfk{{\mathsf k}}
\newcommand \bark{{\bar{\mathsf k}}}
\numberwithin{equation}{subsection}
\title{The Wavefront Sets of Unipotent Supercuspidal Representations}
\begin{document}
\maketitle
\begin{abstract}
Let $\mathbf{G}(\sfk)$ be a semisimple $p$-adic group, inner to split. In this article, we compute the algebraic and canonical unramified wavefront sets of the irreducible supercuspidal representations of $\mathbf{G}(\sfk)$ in Lusztig's category of unipotent representations. 
\end{abstract}

\tableofcontents

\section{Introduction}

Let $\mathbf{G}$ be a connected semisimple algebraic group defined over a $p$-adic field $\sfk$ with residue field $\mathbb{F}_q$ and let $\mathbf{G}(\sfk)$ be the group of $\sfk$-rational points. Let $X$ be an irreducible admissible representation of $\bfG^\omega(\sfk)$ with distribution character $\Theta_X$. For each nilpotent orbit $\OO$ in the Lie algebra $\fg(\sfk)$ of $\mathbf{G}(\sfk)$, let $\hat{\mu}_{\OO}$ denote the Fourier transform of the associated orbit integral. In \cite{HarishChandra1999}, Harish-Chandra proved that there are complex numbers $c_{\OO}(X) \in \CC$ such that
\begin{equation}\label{eq:localcharacter}\Theta_{X}(\mathrm{exp}(\xi)) = \sum_{\OO} c_{\OO}(X) \hat{\mu}_{\OO}(\xi)\end{equation}
for $\xi \in \fg(\sfk)$ a regular element in a small neighborhood of $0$. The formula (\ref{eq:localcharacter}) is called the \emph{local character expansion} of $X$. 

One of the most fundamental invariants which can be extracted from the local character expansion is the so-called \emph{wavefront set}. This is the set of nilpotent orbits
$$\WF(X) = \max \{\OO \subset \fg(\sfk) \mid c_{\OO}(X) \neq 0\}$$
It is common in the literature to consider a slightly coarser invariant called the \emph{geometric wavefront set}. This is the set of nilpotent orbits over an \emph{algebraic closure} $\bar{\sfk}$ of $\sfk$ which meet some orbit in $\WF(X)$. This set is denoted by $^{\bar{\sfk}}\WF(X)$. A longstanding conjecture of Moeglin and Waldspurger \cite[Page 429]{MW87} asserts that $^{\bar k}\WF(X)$ is a singleton, for all $X$.
Another closely related invariant is the \emph{canonical unramified wavefront set}, which was recently introduced by the third-named author in \cite[Section 2.2.3]{okada2021wavefront}. This final invariant is a refinement of $^{\bar k}\WF(X)$.

Lusztig in \cite[Section 0.3]{Lusztig-IMRN} defined the notion of a \emph{unipotent representation} of $\mathbf{G}(\sfk)$ (see Definition \ref{def:unipotent} below) and completed a Langlands classification for this class of representations, under the assumption that $\bfG$ is simple and adjoint. These restrictions on $\bfG$ have since been removed as a result of various works, culminating with \cite{Sol-LLC}. 

Let $G^{\vee}$ denote the complex Langlands dual group. The (enhanced) Langlands parameter of an irreducible unipotent representation $X$ is the $G^\vee$-orbit of a triple $(\tau,n,\rho)$, where $\tau$ is a semisimple element in $G^\vee$, $n$ is a nilpotent element in $\mathfrak g^\vee$ (the Lie algebra of $G^\vee$) such that $\Ad(\tau)n=q n$, and $\rho$ is an irreducible representation of a certain finite group $A^1_\varphi$, see section \ref{subsec:LLC}. When $\bfG$ is adjoint, $A^1_\varphi$ is the component group of the centralizer of $\tau$ and $n$ in $G^\vee$. Let $\OO^\vee_X$ denote the nilpotent $G^\vee$-orbit of $n$. 

It is natural to ask if and how the local character expansion is related to the Langlands parameter $(\tau,n,\rho)$ of $X$. At one extreme, we have the coefficient $c_0(X)$. It has long been known that when $X$ is tempered, $c_0(X)\neq 0$ if and only if $X$ is square integrable, and in this case, $c_0(X)$ equals, up to a sign, the ratio between the formal degrees of $X$ and the Steinberg representation. An interpretation of the formal degree in terms of the Langlands parameters was conjectured first by Reeder \cite{Reeder-formal}. This interpretation was verified in the case of split exceptional groups by Reeder (\cite{Reeder-formal}) and in the remaining cases by Opdam \cite{Opdam16}. 

At the other extreme, we have the wavefront set of $X$.
In this article, we compute the geometric and canonical unramified wavefront sets of all \emph{supercuspidal} unipotent representations of $\mathbf{G}(\sfk)$ when $\bfG$ is an inner twist of a split group. In all such cases, we find that $^{\bar k}\WF(X)$ is a singleton, and is uniquely determined by the nilpotent part $n$ of the Langlands parameter. More precisely, we prove in Theorem \ref{thm:main}
\begin{equation}\label{eq:mainformulas}
    ^{\bar k}\WF(X)=d(\OO^\vee_X),\quad ^K\WF(X)=d_A(\OO^\vee_X,1).
\end{equation}
Here $d$ and $d_A$ are the duality maps defined by Spaltenstein, also Lusztig and Barbasch-Vogan, and Achar, respectively, see section \ref{subsec:nilpotent}. In particular, the geometric wavefront set $^{\bar k}\WF(X)$ determines the nilpotent orbit $\OO^{\vee}_X$ via the duality map $d$. We emphasize that the simplicity of the formulas (\ref{eq:mainformulas}) is due to the fact that $X$ is supercuspidal and therefore equal to its Aubert-Zelevinsky dual (\cite{Au}). In general, one expects that the wavefront set of $X$ is obtained by duality from the nilpotent parameter associated to the \emph{AZ dual} of $X$, see \cite{unipotent1}. This expression for the wavefront set is closely related to Lusztig's formula for the Kawanaka wavefront set of an irreducible unipotent representation of a finite reductive group \cite[Theorem 11.2]{lusztigunip}. In fact, the finite reductive group results from {\it loc.cit.} play an important role in the construction and analysis of test functions in the local character expansion \cite{barmoy,okada2021wavefront}.

We remark that the methods in this paper show that the canonical unramified wavefront set (and so the geometric wavefront set as well) is in fact a singleton for all depth-zero supercuspidal representations, see Proposition \ref{prop:wfsupp}. The irreducibility of the geometric wavefront set for depth-zero supercuspidal representations was established independently in \cite{AizGouSay}, but also as a consequence of the results in  \cite{barmoy,okada2021wavefront}. %The motivation for the restriction to the unipotent setting in the present paper comes from our interest in relating the wavefront set to the Deligne-Langlands-Lusztig parametrization.

\section{Preliminaries}\label{sec:preliminaries}

Let $\mathsf{k}$ be a nonarchimedean local field of characteristic $0$ with residue field $\mathbb{F}_q$ of sufficiently large characteristic, ring of integers $\mathfrak{o} \subset \mathsf{k}$, and valuation $\mathsf{val}_{\mathsf{k}}$. Fix an algebraic closure $\bar{\mathsf{k}}$ of $\mathsf{k}$ with Galois group $\Gamma$, and let $K \subset \bar{\mathsf{k}}$ be the maximal unramified extension of $\mathsf{k}$ in $\bar{\mathsf{k}}$. 
Let $\mf O$ be the ring of integers of $K$.
Let $\mathrm{Frob}$ be the geometric Frobenius element of $\mathrm{Gal}(K/\mathsf{k})$, the topological generator which induces the inverse of the automorphism $x\to x^q$ of $\mathbb{F}_q$.

Let $\bfG$ be a connected semisimple algebraic group defined over $\mathbb{Z}$, and let $\bfT \subset \mathbf{G}$ be a maximal torus. For any field $F$, we write $\mathbf{G}(F)$, $\mathbf{T}(F)$, etc. for the groups of $F$-rational points. The $\CC$-points are denoted by $G$, $T$, etc. Let $\bfG_{\ad}=\bfG/Z(\bfG)$ denote the adjoint group of $\bfG$.

Write $X^*(\mathbf{T},\bark)$ (resp. $X_*(\mathbf{T},\bark)$) for the lattice of algebraic characters (resp. co-characters) of $\mathbf{T}(\bark)$, and write $\Phi(\mathbf{T},\bark)$ (resp. $\Phi^{\vee}(\mathbf{T},\bark)$) for the set of roots (resp. co-roots). Let
$$\mathcal R=(X^*(\mathbf{T},\bark), \ \Phi(\mathbf{T},\bark),X_*(\mathbf{T},\bark), \ \Phi^\vee(\mathbf{T},\bark), \ \langle \ , \ \rangle)$$
be the root datum corresponding to $\mathbf{G}$, and let $W$ the associated (finite) Weyl group. Let $\mathbf{G}^\vee$ be the Langlands dual group of $\bfG$, i.e. the connected reductive algebraic group corresponding to the dual root datum 
$$\mathcal R^\vee=(X_*(\mathbf{T},\bark), \ \Phi^{\vee}(\mathbf{T},\bark),  X^*(\mathbf{T},\bark), \ \Phi(\mathbf{T},\bark), \ \langle \ , \ \rangle).$$
Set $\Omega=X_*(\mathbf{T},\bark)/\ZZ \Phi^\vee(\mathbf{T},\bark)$. The center $Z(\bfG^\vee)$ can be naturally identified with the irreducible characters $\mathsf{Irr} \Omega$, and dually, $\Omega\cong X^*(Z(\bfG^\vee))$. For $\omega\in\Omega$, let $\zeta_\omega$ denote the corresponding irreducible character of $Z(\bfG^\vee)$.

For details regarding the parametrization of inner twists of a group $\bfG(\mathsf k)$, see \cite[\S2]{Vogan1993}, \cite{Kottwitz1984}, \cite[\S2]{Kaletha2016}, or \cite[\S1.3]{ABPS2017} and \cite[\S1]{FengOpdamSolleveld2021}. We only record here that the set of equivalence classes of inner twists of the split form of $\mathbf G$ are parametrized by the Galois cohomology group 
\[H^1(\Gamma, \mathbf G_{\ad})\cong H^1(F,\mathbf G_{\ad}(K))\cong\Omega_{\ad}\cong \Irr Z(\bfG^\vee_{\mathsf{sc}}),
\]
where $\bfG^\vee_{\mathsf{sc}}$ is the Langlands dual group of $\bfG_{\ad}$, i.e., the simply connected cover of $\bfG^\vee$, and $F$ denotes the action of $\mathrm{Frob}$ on $\bfG(K)$. We identify $\Omega_{\ad}$ with the fundamental group of $\bfG_{\ad}$. The isomorphism above is determined as follows: for a cohomology class $h$ in $H^1(F, \mathbf G_{\ad}(K))$, let $z$ be a representative cocycle. Let $u\in \bfG_{\ad}(K)$ be the image of $F$ under $z$, and let $\omega$ denote the image of $u$ in $\Omega_{\ad}$. Set $F_\omega=\Ad(u)\circ F$. The corresponding rational structure of $\bfG$ is given by $F_\omega$.
Let $\bfG^\omega$ be the connected semisimple group defined over $\sfk$ such that $\bfG(K)^{F_\omega}=\bfG^\omega(\mathsf k)$.
%denote the corresponding group in the inner class of the split form. 
Note that $\bfG^{1} = \bfG$ (where we view $\bfG$ as an algebraic group over $\sfk$ for this equality).

\

If $H$ is a complex reductive group and $x$ is an element of $H$ or $\fh$, we write $H(x)$ for the centralizer of $x$ in $H$, and $A_H(x)$ for the group of connected components of $H(x)$. If $S$ is a subset of $H$ or $\fh$ (or indeed, of $H \cup \fh$), we can similarly define $H(S)$ and $A_H(S)$. We will sometimes write $A(x)$, $A(S)$ when the group $H$ is implicit. 
The subgroups of $H$ of the form $H(x)$ where $x$ is a semisimple element of $H$ are called \emph{pseudo-Levi} subgroups of $H$.

\medskip

Let $\mathcal C(\bfG(\mathsf k))$ be the category of smooth complex $\bfG(\mathsf k)$-representations and let $\Pi(\mathbf{G}(\mathsf k)) \subset \mathcal C(\bfG(\mathsf k))$ be the set of irreducible objects. Let $R(\bfG(\mathsf k))$ denote the Grothendieck group of $\mathcal C(\bfG(\mathsf k))$.

\subsection{The Bruhat-Tits Building}
\label{sec:btbuilding}

In this section we will recall some standard facts about the Bruhat-Tits building (all of which can be found in \cite{tits}).

Fix a $\omega \in \Omega$ and let $\bfG^\omega$ be the inner twist of $\bfG$ corresponding $\omega$ as defined in the previous section.
Let $\mathcal B(\bfG^\omega,\sfk)$ denote the (enlarged) Bruhat-Tits building for $\bfG^\omega(\sfk)$. 
Let $\mathcal B(\bfG,K)$ denote the (enlarged) Bruhat-Tits for $\bfG(K)$.
For an apartment $\mathcal A$ of $\mathcal B(\bfG,K)$ and $\Omega\subseteq \mathcal A$ we write $\mathcal A(\Omega,\mathcal A)$ for the smallest affine subspace of $\mathcal A$ containing $\Omega$.
The inner twist $\bfG^\omega$ of $\bfG$ gives rise to an action of the Galois group $\Gal(K/k)$ on $\mathcal B(\bfG,K)$ and we can (and will) identify $\mathcal B(\bfG^\omega,\sfk)$ with the fixed points of this action.
We use the notation  $c\subseteq \mathcal B(\bfG^\omega,\sfk)$ to indicate that $c$ is a face of $\mathcal B(\bfG^\omega,\sfk)$.
Given a maximal $\sfk$-split torus $\bfT$ of $\bfG^\omega$, write $\mathcal A(\bfT,\sfk)$ for the corresponding apartment in $\mathcal B(\bfG^\omega,\sfk)$.
Write $\Phi(\bfT,\sfk)$ (resp. $\Psi(\bfT,\sfk)$) for the set of roots of $\bfG(\sfk)$ (resp. affine roots) of $\bfT(\sfk)$ on $\bfG^\omega(\sfk)$. 
For $\psi\in \Psi(\bfT,\sfk)$ write $\dot\psi\in \Phi(\bfT,\sfk)$ for the gradient of $\psi$, and
$W=W(\bfT,\sfk)$ for the Weyl group of $\bfG(\sfk)$ with respect to $\bfT(\sfk)$.
For a face $c\subseteq \mathcal B(\bfG^\omega,\sfk)$ there is a group $\bfP_c^\dagger$ defined over $\mf o$ such that $\bfP_c^\dagger(\mf o)$ identifies with the stabiliser of $c$ in $\bfG(k)$. There is an exact sequence
\begin{equation}\label{eq:parahoricses}
    1 \to \bfU_c(\mf o) \to  \bfP_c^\dagger(\mf o) \to  \bfL_c^\dagger(\mathbb F_q) \to 1,
\end{equation}
where $\bfU_c(\mf o)$ is the pro-unipotent radical of $\bfP_c^\dagger(\mf o)$ and $\bfL_c^\dagger$ is the reductive quotient of the special fibre of $\bfP_c^\dagger$.
Let $\bfL_c$ denote the identity component of $\bfL_c^\dagger$, and let $\bfP_c$ be the subgroup of $\bf P_c^\dagger$ defined over $\mf o$ such that $\bfP_c(\mf o)$ is the inverse image of $\bfL_c(\mathbb F_q)$ in $\bfP_c^\dagger(\mf o)$.
We also write $\bfT$ for the well defined split torus scheme over $\mf o$ with generic fibre $\bfT$.
This scheme $\bfT$ defined over $\mf o$ is a subgroup of $\bfP_c$ and the special fibre of $\bfT$, denoted $\bar\bfT$, is a maximal torus of $\bfL_c$.
For $c$ viewed as a face of $\mathcal B(\bfG,K)$, the stabiliser of $c$ in $\bfG(K)$ identifies with $\bfP_c^\dagger(\mf O)$.
It has pro-unipotent radical $\bfU_c(\mf O)$ and $\bfP_c^\dagger(\mf O)/\bfU_c(\mf O) = \bfL_c^\dagger(\overline{\mathbb F}_q)$.
For $c$ a face lying in $\mathcal B(\bfG^\omega,\sfk)\subseteq \mathcal B(\bfG,K)$, $F_\omega$ stabilises $\bfP_c(\mf O)$ and induces a Frobenius on $\bfL_c(\overline{\mathbb F}_q)$.
The group $\bfL_c(\mathbb F_q)$ consists of the fixed points of this Frobenius.
The groups $\bfP_c(\mf o)$ obtained in this manner are called ($\sfk$-)\emph{parahoric subgroups} of $\bfG^\omega$.
When $c$ is a chamber, then we call $\bfP_c(\mf o)$ an \emph{Iwahori subgroup} of $\mathbf{G}$. 

For this paper it will be convenient to fix a maximal $\sfk$-split torus $\bfT$ of $\bfG^\omega$ and a maximal $K$-split torus $\bfT_1$ of $\bfG^\omega$ containing $\bfT$ and defined over $\sfk$.
We have that $\mathcal A(\bfT,\sfk) = \mathcal A(\bfT_1,K)^{\Gal(K/k)}$.
We will also fix a $\Gal(K/\sfk)$-stable chamber $c_0$ of $\mathcal A(\bfT_1,K)$ and a special point $x_0\in c_0$.
Let $\widetilde W=W\ltimes X_*(\mathbf{T}_1,K)$ be the (extended) affine Weyl group. 
The choice of special point $x_0$ of $\mathcal B(\bfG,K)$ fixes an inclusion $\Phi(\bfT_1,K)\to \Psi(\bfT_1,K)$ and an isomorphism between $\widetilde W$ and $N_{\bfG(K)}(\bfT_1(K))/\bfT_1(\mf O^\times)$.
Write 
\begin{equation}
    \widetilde W\to W, \qquad w\mapsto \dot w
\end{equation}
for the natural projection map.
For a face $c\subseteq \mathcal A$ let $W_c$ be the subgroup of $\widetilde{W}$ generated by reflections in the hyperplanes through $c$.
The special fibre of $\bfT_1$ (as a scheme over $\mf O$) which we denote by $\overline{\bfT}_1$, is a split maximal torus of $\bfL_c(\overline{\mathbb F}_q)$.
Write $\Phi_c(\bar\bfT_1,\overline{\mathbb F}_q)$ for the root system of $\bfL_c$ with respect to $\bar\bfT_1$.
Then $\Phi_c(\bar\bfT_1,\overline{\mathbb F}_q)$ naturally identifies with the set of $\psi\in\Psi(\bfT_1,K)$ that vanish on $c$, and the Weyl group of $\bar\bfT_1$ in $\bfL_c$ is isomorphic to $W_c$. 

Recall that a choice of $x_0$ fixes an embedding $\Phi(\bfT_1,K)\to \Psi(\bfT_1,L)$.
If we fix a set of simple roots $\Delta \subset \Phi(\bfT_1,K)$, this embedding determines a set of extended simple roots $\tilde\Delta\subseteq \Psi(\bfT_1,K)$.
When $\Phi(\bfT_1,K)$ is irreducible, $\tilde\Delta$ is just the set $\Delta\cup\{1-\alpha_0\}$ where $\alpha_0$ is the highest root of $\Phi(\bfT_1,K)$ with respect to $\Delta$.
When $\Phi(\bfT_1,K)$ is reducible, say $\Phi(\bfT_1,K) = \cup_i\Phi_i$ where each $\Phi_i$ is irreducible, then $\tilde\Delta = \cup_i\tilde\Delta_i$ where $\Delta_i = \Phi_i \cap \Delta$.
Fix $\Delta$ so that the chamber cut out by $\tilde\Delta$ is $c_0$.
Let
$$\bfP(\tilde\Delta) := \{J\subsetneq \tilde\Delta: J\cap \tilde\Delta_i\subsetneq \tilde\Delta_i,\forall i\}.$$
Each $J\in \bfP(\tilde\Delta)$ cuts out a face of $c_0$ which we denote by $c(J)$.
In particular $c(\Delta) = x_0$.
Note that since $\Omega \simeq \widetilde W/W\ltimes\ZZ\Phi(\bfT_1,K)$ (recall $\bfG$ is semisimple), and $W\ltimes \ZZ\Phi(\bfT_1,K)$ acts simply transitively on the chambers of $\mathcal A(\bfT_1,K)$, the action of $\widetilde W$ on $\mathcal A(\bfT_1,K)$ induces an action of $\Omega$ on the faces of $c_0$ and hence on $\tilde\Delta$ (and $\bfP(\tilde\Delta)$).
For $\omega\in \Omega$ let $\sigma_\omega$ denote the corresponding permutation of $\tilde\Delta$.
%The faces of $c_0$ that intersect $\mathcal B(\bfG^\omega,\sfk)$ are exactly those that correspond to $J\in \bfP(\tilde\Delta)$ with $\sigma_\omega(J) = J$.
Let
$$\bfP^\omega(\tilde\Delta) := \{J\in \bfP(\tilde\Delta) \mid \sigma_\omega(J) = J\}$$
and let $c_0^\omega$ be the chamber of $\mathcal B(\bfG^\omega,\sfk)$ lying in $c_0$.
The set $\bfP^\omega(\tilde\Delta)$ is an indexing set for the faces of $c_0^\omega$.
For $J\in \bfP^\omega(\tilde\Delta)$ write $c^\omega(J)$ for the face of $c_0^\omega$ corresponding to $J$.
The face $c^\omega(J)$ lies in $c(J)$.
Moreover for $J,J'\in \bfP^\omega(\tilde\Delta)$ (resp. $\bfP(\tilde\Delta)$) we have $J\subseteq J'$ if and only if $\overline{c^\omega(J)}\supseteq c^\omega(J')$ (resp. $\overline{c(J)}\supseteq c(J')$).
\subsection{Nilpotent orbits}\label{subsec:nilpotent}

Let $\mathcal N$ be the functor which takes a field $F$ to the set of nilpotent elements in $\mf g(F)$, and let $\mathcal N_o$ be the functor which takes $F$ to the set of adjoint $\bfG(F)$-orbits on $\mathcal N(F)$. When $F$ is $\sfk$ or $K$, we view $\mathcal N_o(F)$ as a partially ordered set with respect to the closure ordering in the topology induced by the topology on $F$.
When $F$ is algebraically closed, we view $\mathcal N_o(F)$ as a partially ordered set with respect to the closure ordering in the Zariski topology.
For brevity we will write $\mathcal N(F'/F)$ (resp. $\mathcal N_o(F'/F)$) for $\mathcal N(F\to F')$ (resp. $\mathcal N_o(F\to F')$) where $F\to F'$ is a morphism of fields.
For $(F,F')=(\sfk,K)$ (resp. $(\sfk,\bark)$, $(K,\bark)$), the map $\mathcal N_o(F'/F)$ is strictly increasing (resp. strictly increasing, non-decreasing).
We will simply write $\mathcal N$ for $\mathcal N(\CC)$ and $\mathcal N_o$ for $\mathcal N_o(\CC)$.
In this case we also define $\mathcal N_{o,c}$ (resp. $\mathcal N_{o,\bar c}$) to be the set of all pairs $(\OO,C)$ such that $\OO\in \mathcal N_o$ and $C$ is a conjugacy class in the fundamental group $A(\OO)$ of $\OO$ (resp. Lusztig's canonical quotient $\bar A(\OO)$ of $A(\OO)$, see \cite[Section 5]{Sommers2001}). There is a natural map 
\begin{equation}
    \mf Q:\mathcal N_{o,c}\to\mathcal N_{o,\bar c}, \qquad (\OO,C)\mapsto (\OO,\bar C)
\end{equation}
where $\bar C$ is the image of $C$ in $\bar A(\OO)$ under the natural homomorphism $A(\OO)\twoheadrightarrow \bar A(\OO)$. There are also projection maps $\pr_1: \cN_{o,c} \to \cN_o$, $\pr_1: \cN_{o,\bar c} \to \cN_o$. We will typically write $\mathcal N^\vee$, $\mathcal N^\vee_o, \cN^{\vee}_{o,c}$, and $\cN^{\vee}_{o,\bar c}$ for the sets $\mathcal N$, $\mathcal N_o, \cN_{o,c}$, and $\cN_{o,\bar c}$ associated to the Langlands dual group $G^\vee$. When we wish to emphasise the group we are working with we include it as a superscript e.g. $\mathcal N_o^{\bfG^\omega}$.
Note that since $\bfG^\omega$ splits over $K$ we have that $\mathcal N_o^{\bfG}(F) = \mathcal N_o^{\bfG^\omega}(F)$ for field extensions $F$ of $K$.

\paragraph{Classical results and constructions}
Recall the following classical results and constructions related to nilpotent orbits.

\begin{lemma}[Corollary 3.5, \cite{Pommerening} and Theorem 1.5, \cite{Pommerening2}]\label{lem:Noalgclosed}
    Let $F$ be algebraically closed with good characteristic for $\bfG$.
    Then there is canonical isomorphism of partially ordered sets $\Lambda^F:\mathcal N_o^\bfG(F)\xrightarrow{\sim}\mathcal N_o$.
\end{lemma}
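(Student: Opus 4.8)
The plan is to obtain $\Lambda^F$ from the Bala--Carter--Pommerening classification of nilpotent orbits, which in good characteristic runs parallel to the characteristic-zero theory, and then to upgrade the resulting bijection of sets to an isomorphism of partially ordered sets.

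First I would invoke Pommerening's form of the Bala--Carter classification (\cite{Pommerening,Pommerening2}): for every algebraically closed field $F$ of good characteristic there is a bijection $\mathcal N_o^\bfG(F)\xrightarrow{\sim}\mathcal D(\bfG)$, where $\mathcal D(\bfG)$ is the set of $\bfG(F)$-conjugacy classes of pairs $(\bfL,\bfP_\bfL)$ with $\bfL$ a Levi subgroup of $\bfG$ over $F$ and $\bfP_\bfL$ a distinguished parabolic subgroup of $\bfL$. The key point is that $\mathcal D(\bfG)$ depends only on the root datum of $\bfG$, not on $F$: conjugacy classes of Levi subgroups are described by subsets of a fixed simple system up to the Weyl group, and inside a given Levi the distinguished parabolics up to conjugacy are singled out among the standard parabolics by a dimension condition on the associated $\ZZ$-grading of $\fg$, all of which is manifestly field-independent. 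Composing the bijection for $F$ with the inverse of the one for $\CC$ produces $\Lambda^F\colon\mathcal N_o^\bfG(F)\xrightarrow{\sim}\mathcal N_o$, and this map is canonical: for a fixed field the Bala--Carter bijection is canonical once $\mathcal D(\bfG)$ is fixed, and two identifications of $\mathcal D(\bfG)$ coming from different pinnings of the Chevalley $\ZZ$-form of $\bfG$ differ by an inner automorphism, hence agree on conjugacy classes.

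It then remains to check that $\Lambda^F$ preserves the closure orderings, for which I would combine two inputs. The first is that orbit dimensions match: $\dim\OO$ is determined by the Bala--Carter datum by a characteristic-independent formula --- equivalently, $\dim Z_\bfG(e)$ takes its characteristic-zero value whenever the characteristic is good --- so $\dim\OO=\dim\Lambda^F(\OO)$. The second is a specialization argument over $\ZZ[1/S]$: given complex orbits $\OO_1^\CC\subseteq\overline{\OO_2^\CC}$, spread out representatives and the Bala--Carter pair of $\OO_2^\CC$ over $\ZZ[1/S]$ (with $S$ finite, containing the bad primes), and reduce modulo a good prime $p\notin S$; since the Bala--Carter datum, and in particular distinguishedness (a dimension condition), is preserved under reduction, this identifies the reductions with representatives of $(\Lambda^F)^{-1}(\OO_i^\CC)$ for $F=\overline{\mathbb F}_p$, and comparing with the special fibre of the flat $\ZZ[1/S]$-closure of $\OO_2^\CC$ --- a $\bfG$-stable closed subvariety of $\fg_{\overline{\mathbb F}_p}$ of dimension $\dim\OO_2^\CC$ --- shows, modulo checking that this special fibre is irreducible, that $(\Lambda^F)^{-1}$ is order-preserving. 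The reverse implication is more subtle (it would require lifting degenerations from characteristic $p$ to characteristic zero, or a counting argument), and here the cleanest route is the explicit one: for classical $\bfG$ both posets are the dominance order on the relevant set of partitions --- under which $\Lambda^F$ is the identity --- and for exceptional $\bfG$ the good-characteristic Hasse diagrams, tabulated by Spaltenstein and by Liebeck--Seitz, coincide with the characteristic-zero ones.

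I expect the main obstacle to be exactly this order-isomorphism step: the existence and canonicity of $\Lambda^F$ are essentially formal once Pommerening's theorem is granted, whereas controlling the closure relations uniformly in a good characteristic --- ensuring not merely that the orbit sets match but that their Hasse diagrams do --- is the substantive issue, and is where one must either invoke a refined semicontinuity argument over $\ZZ[1/S]$ or rely on the case-by-case classification of minimal degenerations between nilpotent orbits.
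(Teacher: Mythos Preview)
The paper does not prove this lemma at all: it is stated with a direct citation to Pommerening's papers and no argument is given in the body. So there is nothing to compare your proposal against except the bare reference.

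Your sketch is nonetheless a reasonable reconstruction of what lies behind that citation. The bijection via Bala--Carter--Pommerening data is exactly the right source for the map $\Lambda^F$, and your observation that the combinatorial parameter set $\mathcal D(\bfG)$ is field-independent is the point. You are also honest about where the real content lies: the closure-order compatibility. Your specialization argument over $\ZZ[1/S]$ gives one direction cleanly but, as you note, the reverse direction (lifting degenerations from positive characteristic to characteristic zero) is not formal, and the standard route in the literature is indeed the case-by-case one you fall back on --- partition dominance for classical types and the tabulated Hasse diagrams (Spaltenstein, Carter, Liebeck--Seitz) for exceptional types in good characteristic. That is how one would actually justify the ``isomorphism of partially ordered sets'' clause if pressed; the paper simply takes it as known.
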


Write
\begin{equation}\label{eq:dBV}
d: \cN_0 \to \cN_0^{\vee}, \qquad d: \cN_0^{\vee} \to \cN_0.
\end{equation}
%
%for the \emph{Lusztig-Spaltenstein-Barbasch-Vogan duality maps} (see \cite[Appendix A]{BarbaschVogan1985}). 
duality maps defined by Spaltenstein (\cite[Proposition 10.3]{Spaltenstein}), Lusztig (\cite[\S13.3]{Lusztig1984}, and Barbasch-Vogan (\cite[Appendix A]{BarbaschVogan1985}).
%If $F$ is algebraically closed, we will also write
%
%\begin{equation}
%d: \cN_o(F) \to \cN_o^{\vee}(F), \qquad d: \cN_o^{\vee}(F) \to \cN_o(F)
%\end{equation}
%for the maps obtained by composing the maps (\ref{eq:dBV}) with the natural identifications $\mathcal N_o(F)\simeq \mathcal N_o$, $\mathcal N_o^\vee(F)\simeq \mathcal N_o^\vee$ of Lemma \ref{lem:Noalgclosed}. 
Write 
\begin{equation}
    d_S: \cN_{o,c} \twoheadrightarrow \cN^{\vee}_o, \qquad d_S: \cN^{\vee}_{o,c} \twoheadrightarrow \cN_o
\end{equation}
for the duality maps defined by Sommers in \cite[Section 6]{Sommers2001} and 
\begin{equation}
    d_A: \cN_{o,\bar c} \to \cN^{\vee}_{o,\bar c}, \qquad d_A: \cN^{\vee}_{o,\bar c} \to \cN_{o,\bar c}
\end{equation}
for the duality maps defined by Achar in (\cite[Section 1]{Acharduality}). 
These duality maps are compatible in the following sense.
For $\OO\in \mathcal N_o$
$$d_S(\OO,1) = d(\OO)$$
and for $(\OO,C)\in \mathcal N_{o,c}$
$$d_A(\mf Q(\OO,C)) = (d_S(\OO,C),\bar C')$$
for some $\bar C'$.

There is a natural pre-order $\leq_A$ on $\mathcal N_{o,c}$ defined by Achar in \cite[Introduction]{Acharduality} by
$$(\OO,C)\le_A(\OO',C') \iff \OO\le \OO' \text{ and } d_S(\OO,C)\ge d_S(\OO',C').$$
Write $\sim_A$ for the equivalence relation on $\cN_{o,c}$ induced by this pre-order, i.e. 
$$(\OO_1,C_1) \sim_A (\OO_2,C_2) \iff (\OO_1,C_1) \leq_A (\OO_2,C_2) \text{ and } (\OO_2,C_2) \leq_A (\OO_1,C_1)$$
Write $[(\OO,C)]$ for the equivalence class of $(\OO,C) \in \cN_{o,c}$. The $\sim_A$-equivalence classes in $\cN_{o,c}$ coincide with the fibres of the projection map $\mf Q:\mathcal N_{o,c}\to\mathcal N_{o,\bar c}$ \cite[Theorem 1]{Acharduality}. So $\le_A$ descends to a partial order on $\mathcal N_{o,\bar c}$, also denoted by $\le_A$. The maps $d,d_S,d_A$ are all order reversing with respect to the relevant pre/partial orders. 

\paragraph{Structure of $\mathcal N_o^\bfG(K)$}
In \cite[Section 2]{okada2021wavefront} the third-named author establishes a number of results about the structure of $\mathcal N_o^\bfG(K) = \mathcal N_o^{\bfG^\omega}(K)$ which we now briefly summarize.

Let $\bfT$ be a maximal $\sfk$-split torus of $\bfG^\omega$, $\bfT_1$ be a maximal $K$-split torus of $\bfG^\omega$ defined over $\sfk$ and containing $\bfT$, and $x_0$ be a special point in $\mathcal A(\bfT_1,K)$.
In \cite[Section 2.1.5]{okada2021wavefront} the third-named author constructs a bijection
$$\theta_{x_0,\bfT_1}:\mathcal N_o^{\bfG^\omega}(K)\xrightarrow{\sim}\mathcal N_{o,c}.$$
\begin{theorem}
    \label{lem:paramNoK}
    \cite[Theorem 2.20, Theorem 2.27, Proposition 2.29]{okada2021wavefront}
    The bijection 
    $$\theta_{x_0,\bfT_1}:\mathcal N_o^{\bfG^\omega}(K)\xrightarrow{\sim}\mathcal N_{o,c}$$
    is natural in $\bfT_1$, equivariant in $x_0$, and makes the following diagram commute:
    \begin{equation}
        \begin{tikzcd}[column sep = large]
            \mathcal N_o^{\bfG^\omega}(K) \arrow[r,"\theta_{x_0,\bfT_1}"] \arrow[d,"\mathcal N_o(\bar k/K)",swap] & \mathcal N_{o,c} \arrow[d,"\pr_1"] \\
            \mathcal N_o(\bar k) \arrow[r,"\Lambda^{\bar k}"] & \mathcal N_o.
        \end{tikzcd}
    \end{equation}
\end{theorem}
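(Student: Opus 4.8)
The plan is to unwind the construction of $\theta_{x_0,\bfT_1}$ and verify the three assertions — independence of the auxiliary data, bijectivity, and the commuting square — by matching that construction step by step against two classical parametrizations: the generalized Bala--Carter classification of $\cN_{o,c}$ (Sommers, building on \cite{Sommers2001}) and the Bala--Carter/Pommerening classification of $\cN_o^{\bfG}(\bar k)\cong\cN_o$. Recall how $\theta_{x_0,\bfT_1}$ is built in \cite{okada2021wavefront}: given $\OO\in\cN_o^{\bfG^\omega}(K)$, pick $N\in\OO$; since $\bfG^\omega$ splits over $K$, DeBacker's Bruhat--Tits theory attaches to $N$ a facet $c$ of $\mathcal B(\bfG,K)$, which after conjugating into $\overline{c_0}$ equals $c(J)$ for a unique $J\in\bfP(\tilde\Delta)$, together with the image $\bar\OO$ of $N$ in the Moy--Prasad quotient $\fg_{c,0}/\fg_{c,0^+}\cong\mathfrak l_c(\overline{\mathbb F}_q)$; the optimal facet is selected so that $\bar\OO$ is \emph{distinguished} in $\mathfrak l_c$. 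Now $\bfL_c$ is the pseudo-Levi (Borel--de Siebenthal subgroup) cut out by deleting the nodes $J$ from $\tilde\Delta$, so $\bfL_c=\bfG(s)^\circ$ for a semisimple $s$ with a well-defined conjugacy class; transporting $\bar\OO$ to characteristic zero via the isomorphism $\Lambda$ of Lemma~\ref{lem:Noalgclosed} for $\bfL_c$ and feeding the resulting pair (pseudo-Levi, distinguished orbit) into the generalized Bala--Carter theorem produces $(\OO',C)\in\cN_{o,c}$, where $\OO'$ is the $G$-orbit of a representative of $\Lambda(\bar\OO)$ regarded inside $\fg$ and $C$ is the image of $s$ in $A(\OO')$. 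By definition $\theta_{x_0,\bfT_1}(\OO)=(\OO',C)$, so $\pr_1(\theta_{x_0,\bfT_1}(\OO))=\OO'$.

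For the naturality and equivariance claims: since all maximal $K$-split tori of $\bfG^\omega$ are $\bfG(K)$-conjugate and $\cN_{o,c}$ is intrinsic to the complex group $\bfG$, it is enough to check that the output $(\OO',C)$ is unaffected by the intermediate choices — the representative $N$, the Moy--Prasad lift used to realise $\bar\OO$, and the element conjugating the optimal facet into $\overline{c_0}$. Independence of $N$ and of the lift is part of DeBacker's well-definedness statements; independence of the last conjugation holds because two valid choices differ by an element of the stabiliser of $c_0$, which acts on $\bfL_c$ through an automorphism preserving the class of the distinguished orbit $\bar\OO$ and of $s$, hence fixing $(\OO',C)$. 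Replacing $(x_0,\bfT_1)$ by $(g\cdot x_0, g\bfT_1 g^{-1})$ transports every ingredient by $\Ad(g)$, which gives the equivariance. I would not reprove bijectivity directly: DeBacker's assignment $N\mapsto(c,\bar\OO)$ is bijective modulo the appropriate association relation on facets, the generalized Bala--Carter theorem is bijective, and the content is that these two equivalences correspond — a point I would quote from \cite{okada2021wavefront}.

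For the commuting square we must show $\Lambda^{\bar k}\bigl(\cN_o(\bar k/K)(\OO)\bigr)=\OO'$. Take $N\in\OO$ as above, lying in $\fg_{c,r}\subset\fg(K)\subset\fg(\bar k)$ and reducing to the distinguished orbit $\bar\OO$ of $\mathfrak l_c$. Over $\bar k$ the Moy--Prasad data at $c$ promotes to an honest $\mathbb Q$-cocharacter $\lambda$ of $\bfT_1$ grading $N$ in the manner prescribed by Bala--Carter, and the pair $(\bfL_c,\bar\OO)$ is precisely a Bala--Carter datum; Pommerening's theorem over $\bar k$ then identifies the $\bfG(\bar k)$-orbit of $N$ as the nilpotent orbit carrying that datum. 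Since $\Lambda$ preserves Bala--Carter data and the notion of distinguishedness in good characteristic, the orbit in characteristic zero with the same datum is exactly $\OO'$. Hence $\cN_o(\bar k/K)(\OO)$ and $\OO'$ carry equal Bala--Carter data, so $\Lambda^{\bar k}\bigl(\cN_o(\bar k/K)(\OO)\bigr)=\OO'=\pr_1(\theta_{x_0,\bfT_1}(\OO))$.

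The main obstacle is the last step: reconciling the $p$-adic picture (Moy--Prasad filtrations in residue characteristic $p$) with the characteristic-zero Bala--Carter picture through the algebraic closure $\bar k$. This is what makes the ``sufficiently large characteristic'' hypothesis essential — one must know that over $\bar k$ the optimal facet $c$ genuinely computes the saturation of the orbit of $N$ (so that no orbit information is lost in passing from $K$ to $\bar k$), that Pommerening's theorem applies verbatim, and that $\Lambda$ is simultaneously compatible with Bala--Carter data, with distinguishedness, and with the generalized Bala--Carter correspondence. Establishing these compatibilities, rather than the formal bookkeeping of the first two paragraphs, is where the real work lies.
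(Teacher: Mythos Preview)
The paper does not prove this theorem; it is stated with an explicit citation \cite[Theorem 2.20, Theorem 2.27, Proposition 2.29]{okada2021wavefront} and used as a black box, so there is no proof in the present paper to compare your proposal against. Your sketch is a plausible outline of how the argument in \cite{okada2021wavefront} is organized (DeBacker's parametrization of $\cN_o(K)$ via optimal facets and distinguished orbits in the reductive quotients, matched with the Sommers/generalized Bala--Carter parametrization of $\cN_{o,c}$, with the commuting square coming from compatibility of Bala--Carter data under $\Lambda$), but verifying its correctness would require checking against that reference rather than the paper at hand.
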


The composition 
$$d_{S,\bfT_1}:= d_S\circ \theta_{x_0,\bfT_1}$$
is independent of the choice of $x_0$ and natural in $\bfT_1$ \cite[Proposition 2.32]{okada2021wavefront}.

For $\OO_1,\OO_2\in \mathcal N_o(K)$ define $\OO_1\le_A\OO_2$ by
$$\OO_1\le_A \OO_2 \iff \mathcal N_o(\bar k/K)(\OO_1) \le \mathcal N_o(\bar k/K)(\OO_2),\text{ and } d_{S,\bfT_1}(\OO_1)\ge d_{S,\bfT_2}(\OO_2)$$
and let $\sim_A$ denote the equivalence classes of this pre-order.
This pre-order is independent of the choice of $\bfT_1$ and the map 
$$\theta_{x_0,\bfT_1}:(\mathcal N_o(K),\le_A) \to (\mathcal N_{o,c},\le_A)$$
is an isomorphism of pre-orders.
\begin{theorem}
    \label{thm:unramclasses}
    The composition $\mf Q\circ \theta_{x_0,\bfT_1}:\mathcal N_o(K)\to \mathcal N_{o,\bar c}$ descends to a (natural in $\bfT_1$) bijection 
    $$\bar\theta_{\bfT_1}:\mathcal N_o(K)/\sim_A\to \mathcal N_{o,\bar c}$$
    which does not depend on $x_0$.
\end{theorem}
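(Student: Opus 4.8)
The plan is to assemble the statement formally from the structural facts already recorded, so that no genuinely new computation is needed. First I would recall from the discussion preceding the theorem that $\theta_{x_0,\bfT_1}$ is simultaneously a bijection $\mathcal N_o(K)\to\mathcal N_{o,c}$ and an isomorphism of pre-orders $(\mathcal N_o(K),\le_A)\xrightarrow{\sim}(\mathcal N_{o,c},\le_A)$; hence it induces a bijection between the sets of $\sim_A$-equivalence classes on the two sides. On the target side, Achar's theorem (\cite[Theorem 1]{Acharduality}, quoted above) identifies the $\sim_A$-classes in $\mathcal N_{o,c}$ with the fibres of $\mf Q:\mathcal N_{o,c}\to\mathcal N_{o,\bar c}$, and $\mf Q$ is surjective because $A(\OO)\twoheadrightarrow\bar A(\OO)$ is surjective on conjugacy classes (so any $(\OO,\bar C)$ is hit by choosing a lift $C$). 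Thus $\mf Q$ descends to a bijection $\mathcal N_{o,c}/\!\sim_A\ \xrightarrow{\sim}\mathcal N_{o,\bar c}$.

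Combining these two observations, $\mf Q\circ\theta_{x_0,\bfT_1}$ is constant on each $\sim_A$-class of $\mathcal N_o(K)$: if $\OO_1\sim_A\OO_2$ then $\theta_{x_0,\bfT_1}\OO_1\sim_A\theta_{x_0,\bfT_1}\OO_2$, so they lie in a common $\mf Q$-fibre. Hence it factors through a map $\bar\theta_{\bfT_1}:\mathcal N_o(K)/\!\sim_A\ \to\mathcal N_{o,\bar c}$, which is a composite of two bijections and so is itself bijective. (Injectivity directly: $\mf Q(\theta_{x_0,\bfT_1}\OO_1)=\mf Q(\theta_{x_0,\bfT_1}\OO_2)$ forces $\theta_{x_0,\bfT_1}\OO_1\sim_A\theta_{x_0,\bfT_1}\OO_2$, hence $\OO_1\sim_A\OO_2$ since $\theta_{x_0,\bfT_1}$ is a pre-order isomorphism; surjectivity because $\mf Q\circ\theta_{x_0,\bfT_1}$ is already surjective.)

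It remains to check independence of $x_0$ and naturality in $\bfT_1$. For the former I would unwind the definition of $\le_A$ on $\mathcal N_{o,c}$: two pairs $(\OO,C),(\OO',C')$ satisfy $(\OO,C)\sim_A(\OO',C')$ exactly when $\OO=\OO'$ and $d_S(\OO,C)=d_S(\OO',C')$, by antisymmetry of the closure orders on $\mathcal N_o$ and $\mathcal N^\vee_o$. Combined with Achar's identification of $\sim_A$-classes with $\mf Q$-fibres, this says $\mf Q(\OO,C)$ depends only on the pair $\bigl(\OO,\,d_S(\OO,C)\bigr)$. Applying this with $(\OO,C)=\theta_{x_0,\bfT_1}(\OO)$, and using that $\pr_1\circ\theta_{x_0,\bfT_1}=\Lambda^{\bar k}\circ\mathcal N_o(\bar k/K)$ (the commuting square of Theorem \ref{lem:paramNoK}) and $d_S\circ\theta_{x_0,\bfT_1}=d_{S,\bfT_1}$ are both independent of $x_0$ (\cite[Proposition 2.32]{okada2021wavefront}), we conclude that $\mf Q\circ\theta_{x_0,\bfT_1}$, and hence $\bar\theta_{\bfT_1}$, does not depend on $x_0$. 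Naturality in $\bfT_1$ is then inherited directly: $\theta_{x_0,\bfT_1}$ is natural in $\bfT_1$ by Theorem \ref{lem:paramNoK}, the pre-order $\le_A$ on $\mathcal N_o(K)$ (hence the quotient $\mathcal N_o(K)/\!\sim_A$) is independent of $\bfT_1$, and $\mf Q$ involves no choice of torus.

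The genuine content lives entirely in the cited inputs — Achar's theorem relating $\sim_A$ to $\mf Q$, and Okada's construction of $\theta_{x_0,\bfT_1}$ together with its $x_0$-equivariance and $\bfT_1$-naturality — so I do not anticipate a substantive obstacle. The only care needed is bookkeeping: verifying that the pre-order transported along $\theta_{x_0,\bfT_1}$ really is the one defined intrinsically on $\mathcal N_o(K)$ via $d_{S,\bfT_1}$ and $\mathcal N_o(\bar k/K)$, and that the identification $\mathcal N_{o,c}/\!\sim_A\ \cong\mathcal N_{o,\bar c}$ being used is precisely the one induced by $\mf Q$ rather than some other bijection.
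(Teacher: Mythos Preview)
The paper does not give a proof of this theorem: it is stated in the preliminaries as part of the summary of results from \cite{okada2021wavefront}, with the surrounding paragraphs recording exactly the ingredients (the pre-order isomorphism property of $\theta_{x_0,\bfT_1}$, Achar's identification of $\sim_A$-classes with $\mf Q$-fibres, and the $x_0$-independence of $d_{S,\bfT_1}$ and of $\pr_1\circ\theta_{x_0,\bfT_1}$) from which it follows formally. Your proposal is precisely this formal assembly and is correct; it is the argument the paper leaves implicit.
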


\paragraph{Lifting nilpotent orbits}
Define 
\begin{equation}
    \mathcal I_o = \{(c,\OO) \mid c\subseteq \mathcal B(\bfG),\OO\in\cN_o^{\bfL_c}(\overline{\mathbb F}_q)\}.
\end{equation}
There is a partial order on $\mathcal{I}_o$, defined by
$$(c_1,\OO_1)\le(c_2,\OO_2) \iff c_1=c_2 \text{ and } \OO_1\le\OO_2$$
In \cite[Section 1.1.2]{okada2021wavefront} the third author defines a strictly increasing surjective map 
\begin{equation}
    \label{eq:lift}
    \mathscr L:(\mathcal I_o,\le)\to(\mathcal N_o(K),\le).
\end{equation}
The composition 
$$[\bullet]\circ \mathscr L:(\mathcal I_o,\le)\to (\mathcal N_o(K)/\sim_A,\le_A)$$
is also strictly increasing \cite[Corollary 4.7,Lemma 5.3]{okada2021wavefront}.

Recall the groups $G=\bfG(\CC),T=\bfT(\CC)$ from section \ref{sec:preliminaries}.
Call a pseudo-Levi subgroup $L$ of $G$ \emph{standard} if it contains $T$ and write $Z_L$ for its center.
Let $\mathcal A = \mathcal A(\bfT_1,K)$.
\begin{lemma}
    \cite[Section 2.14, Corollary 2.19]{okada2021wavefront}
    There is a $W$-equivariant map
    \begin{equation}
        \mf L_{x_0}:\{\text{faces of } \mathcal A\} \rightarrow\{(L,tZ_L^\circ) \mid L\text{ a standard pseudo-Levi}, \  Z_{G}^\circ(tZ_L^\circ)=L\}
    \end{equation}
    where $c_1,c_2$ lie in the same fibre iff 
    $$\mathcal A(c_1,\mathcal A)+X_*(\bfT_1, K)=\mathcal A(c_2,\mathcal A)+X_*(\bfT_1,K).$$
    Moreover, if $\mf L_{x_0}(c) =(L,tZ_L^\circ)$ then $L$ is the complex reductive group with the same root datum as $\bfL_c(\mathbb F_q)$ and thus there is an isomorphism $\Lambda_c^{\barF_q}:\mathcal N_o^{\bfL_c}(\overline{\mathbb F}_q)\xrightarrow{\sim} \mathcal N_o^{L}$.
\end{lemma}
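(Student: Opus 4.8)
The plan is to recall the construction of $\mf L_{x_0}$ from \cite[Section 2.14]{okada2021wavefront} and then verify, in order, that its image lies in the asserted set, that it is $W$-equivariant, that its fibres are as described, and finally the last claim of the lemma. I would use the special point $x_0$ to identify the apartment $\mathcal A=\mathcal A(\bfT_1,K)$ with the real vector space $V:=X_*(\bfT_1,K)\otimes_{\ZZ}\RR$ so that $x_0\mapsto 0$; under the induced embedding $\Phi(\bfT_1,K)\hookrightarrow\Psi(\bfT_1,K)$ the affine roots become $\{\alpha+n:\alpha\in\Phi(\bfT_1,K),\ n\in\ZZ\}$, so the walls of $\mathcal A$ are the hyperplanes $\{\alpha=m\}$ with $\alpha\in\Phi(\bfT_1,K)$ and $m\in\ZZ$. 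Let $\epsilon\colon V\to T$, $v\mapsto\exp(2\pi i v)$, be the homomorphism induced by $\RR\to\CC^{\times},\ r\mapsto e^{2\pi i r}$; its kernel is $X_*(\bfT_1,K)$ and its image is the maximal compact torus $T_c\subseteq T$. For a face $c$ of $\mathcal A$, I pick $x$ in the relative interior of $c$ and set $t_c:=\epsilon(x)$, $L:=Z_G^{\circ}(t_c)$, and $\mf L_{x_0}(c):=(L,t_c Z_L^{\circ})$. Write $V_c$ for the linear part of $\mathcal A(c,\mathcal A)$.

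The first point to prove is that the root system of $L$ is exactly $\Phi_c:=\Phi_c(\bar\bfT_1,\overline{\mathbb F}_q)$, the root system of $\bfL_c$: since $t_c$ acts on the root space $\fg_{\alpha}$ by $e^{2\pi i\alpha(x)}$, the root system of $Z_G(t_c)$ is $\{\alpha:\alpha(x)\in\ZZ\}$, and for $x$ in the relative interior of $c$ the condition $\alpha(x)\in\ZZ$ forces the wall $\{\alpha=\alpha(x)\}$ to contain $c$ (otherwise $\alpha-\alpha(x)$ would change sign inside $c$ near $x$), i.e.\ $\alpha\in\Phi_c$; the reverse inclusion is clear. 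Consequently $V_c=\bigcap_{\alpha\in\Phi_c}\ker\alpha=\mathrm{Lie}(Z_L^{\circ})\cap V$, so two choices of $x$ differ by an element of $V_c$ and hence produce $t_c$'s in the same coset $t_c Z_L^{\circ}$; thus $\mf L_{x_0}$ is well defined. Since $L$ centralizes $t_c$ and $Z_L^{\circ}\subseteq Z(L)$, we get $L\subseteq Z_G^{\circ}(t_c Z_L^{\circ})\subseteq Z_G^{\circ}(t_c)=L$, so $Z_G^{\circ}(t_c Z_L^{\circ})=L$ and the image lies in the asserted set. For $W$-equivariance: $W=W(\bfT_1,K)$ fixes $x_0$, so it permutes the faces of $\mathcal A$ and acts on $T$ (through lifts to $N_G(T)$) compatibly with $\epsilon$, and equivariance then follows from $w\cdot\Phi_c=\Phi_{w\cdot c}$ and $w Z_L^{\circ}w^{-1}=Z_{wLw^{-1}}^{\circ}$.

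For the fibres I would introduce $\mathcal S_c:=\mathcal A(c,\mathcal A)+X_*(\bfT_1,K)$ and show it is equivalent data to $\mf L_{x_0}(c)$. Using $\langle\alpha,\lambda\rangle\in\ZZ$ for $\lambda\in X_*(\bfT_1,K)$ one checks $\Phi_c=\{\alpha\in\Phi(\bfT_1,K):\alpha\text{ is constant on }\mathcal S_c\text{ with integer value}\}$, so $\Phi_c$ — hence $L$ — is recovered from $\mathcal S_c$; conversely, since $\epsilon^{-1}(Z_L^{\circ})=V_c+X_*(\bfT_1,K)$ (the preimage under $\epsilon$ of a subtorus of $T$ being its real Lie algebra plus the kernel of $\epsilon$), in the above coordinates $\epsilon^{-1}(t_c Z_L^{\circ})=x+V_c+X_*(\bfT_1,K)=\mathcal A(c,\mathcal A)+X_*(\bfT_1,K)=\mathcal S_c$, so $\mathcal S_c$ is recovered from $(L,t_c Z_L^{\circ})$. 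Hence $\mf L_{x_0}(c_1)=\mf L_{x_0}(c_2)$ iff $\mathcal S_{c_1}=\mathcal S_{c_2}$, the stated condition. Finally, for the last claim: $L$ is connected, contains $T=\bfT(\CC)$, and has root system $\Phi_c$, while $\bfL_c$ has split maximal torus $\bar\bfT_1$ with cocharacter lattice $X_*(\bfT_1,K)$ and root system $\Phi_c$ inside it, so $L$ and $\bfL_c$ have the same root datum; then $\Lambda_c^{\overline{\mathbb F}_q}$ is the instance of Lemma~\ref{lem:Noalgclosed} for $\bfL_c$ over $\overline{\mathbb F}_q$ (valid since the characteristic of $\mathbb F_q$ is large, hence good for $\bfL_c$) composed with the identification $\mathcal N_o^{\bfL_c}(\CC)=\mathcal N_o^{L}$.

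I expect the fibre computation to be the main obstacle, in particular the identity $\epsilon^{-1}(t_c Z_L^{\circ})=\mathcal S_c$: one has to keep careful track of the fact that $\epsilon$ is not surjective (its image is the compact torus $T_c$) and that $Z(L)$ can be disconnected for a pseudo-Levi $L$, so it is essential that $\mf L_{x_0}$ records the coset of the \emph{connected} center $Z_L^{\circ}$ rather than of $Z(L)$, and one must pin down the corresponding $\epsilon$-preimage on the nose rather than up to a finite ambiguity.
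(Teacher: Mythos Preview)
The paper does not prove this lemma at all: it is quoted verbatim as \cite[Section 2.14, Corollary 2.19]{okada2021wavefront} and used as a black box, so there is no ``paper's own proof'' to compare against. Your proposal is therefore not competing with anything in the present paper; it is a (successful) reconstruction of the argument behind the cited result.

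That said, your reconstruction is correct and essentially the expected one. The identification of $\mathcal A$ with $V=X_*(\bfT_1,K)\otimes_{\ZZ}\RR$ via $x_0$, the exponential $\epsilon$, and the definition $L=Z_G^\circ(\epsilon(x))$ for $x$ in the relative interior of $c$ are the standard ingredients. The key step you single out, namely $\epsilon^{-1}(t_cZ_L^\circ)=\mathcal A(c,\mathcal A)+X_*(\bfT_1,K)$, is indeed the heart of the fibre description, and your argument for it (using that $\epsilon^{-1}(Z_L^\circ)=V_c+X_*(\bfT_1,K)$ because $Z_L^\circ\cap T_c=\epsilon(V_c)$ is the maximal compact torus of $Z_L^\circ$) is sound; the worry you flag about $Z(L)$ versus $Z_L^\circ$ is exactly the right one, and you handle it by working with the connected centre throughout. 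The final identification of root data and the appeal to Lemma~\ref{lem:Noalgclosed} are routine. In short: nothing to fix, but nothing here is being compared to the paper, which simply imports the statement.
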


Recall from the end of section \ref{sec:btbuilding} the definitions of $\Delta,\tilde\Delta,c_0,\bfP(\tilde\Delta)$ and $c(J)$.
Note that the definitions of $c_0$ and $c(J)$ depend on a choice of $x_0$ and $\Delta$.
Let $L_J$ denote the pseudo-Levi subgroup of $G$ generated by $T$ and the root groups corresponding to $\dot \alpha$ for $\alpha\in J$.
Then $\pr_1\circ\mf L_{x_0}(c(J)) = L_J$ (indeed by \cite[Lemma 2.21]{okada2021wavefront}, this group should not depend on $x_0$).
%By \cite[Lemma 2.21]{okada2021wavefront}, the group $L_c$ and isomorphism $\Lambda_c^{\barF_q}$ do not depend on $x_0$.
Define 
\begin{align}
    \mathcal{I}_{x_0,\tilde\Delta}&=\{(J,\OO) \mid J\in \bfP(\tilde\Delta), \ \OO\in\mathcal{N}_o^{\bfL_{c(J)}}(\overline{\mathbb F}_q)\}, \\
    \mathcal{K}_{\tilde\Delta}&=\{(J,\OO) \mid J\in \bfP(\tilde\Delta), \ \OO\in\mathcal{N}_o^{L_{J}}(\CC)\}.
\end{align}
The map
$$\iota_{x_0}:\mathcal I_{x_0,\tilde\Delta}\to\mathcal K_{\tilde\Delta},\quad (J,\OO)\mapsto(J,\Lambda_{c(J)}^{\barF_q}(\OO))$$
is an isomorphism.
Let
\begin{equation}
    \mathbb L:\mathcal K_{\tilde\Delta} \to \mathcal N_{c,o}
\end{equation}
be the map that sends $(J,\OO)$ to $(Gx,tZ_{G}^\circ(x))$ where $x\in\OO$ and $(L,tZ_{L}^\circ) = \mf L(c(J))$ where $\mf L$ is the map from \cite[Corollary 2.19]{okada2021wavefront}.
By \cite[Theorem 2.1.7]{unipotent1} the diagram 
\begin{equation}
    \begin{tikzcd}[column sep = large]
        \mathcal I_{x_0,\tilde\Delta} \arrow[r,"\iota_{x_0}"',"\sim"] \arrow[d,two heads,"\mathscr L"] & \mathcal K_{\tilde\Delta}  \arrow[d,two heads,"\mathbb L"] \\
        \mathcal N_o(K) \arrow[r,"\theta_{x_0,\bfT_1}"',"\sim"] & \mathcal N_{o,c}
    \end{tikzcd}
\end{equation}
commutes.
Define 
$$\overline{\mathbb L}=\mf Q\circ \mathbb L.$$
This map can be computed using Achar's algorithms in \cite[Section 3.4]{Acharduality}.

\subsection{Wavefront sets}\label{s:wave}
Let $X$ be an admissible smooth representation of $\bfG^\omega(\sfk)$ and let $\Theta_X$ be the character of $X$.
Recall that for each nilpotent orbit $\OO\in \mathcal N_o^{\bfG^\omega}(\sfk)$ there is an associated distribution $\mu_\OO$ on $C_c^\infty(\mf g^\omega(\sfk))$ called the \emph{nilpotent orbital integral} of $\OO$ \cite{rangarao}.
Write $\hat\mu_\OO$ for the Fourier transform of this distribution.
Generalizing a result of Howe (\cite{Howe1974}), Harish-Chandra in \cite{HarishChandra1999} showed that there are complex numbers $c_{\OO}(X) \in \CC$ such that
\begin{equation}\label{eq:localcharacter}\Theta_{X}(\mathrm{exp}(\xi)) = \sum_{\OO} c_{\OO}(X) \hat{\mu}_{\OO}(\xi)\end{equation}
for $\xi \in \fg^\omega(\sfk)$ a regular element in a small neighborhood of $0$. The formula (\ref{eq:localcharacter}) is called the \emph{local character expansion} of $\pi$. 
The \textit{($p$-adic) wavefront set} of $X$ is
$$\WF(X) := \max\{\OO \mid  c_{\OO}(X)\ne 0\} \subseteq \mathcal N_o(\sfk).$$
The \emph{geometric wavefront set} of $X$ is
$$^{\bar k}\WF(X) := \max \{\mathcal N_o(\bark/\sfk)(\OO) \mid c_{\OO}(X)\ne 0\} \subseteq \mathcal N_o(\bark),$$
see \cite[p. 1108]{Wald18} (warning: in \cite{Wald18}, the invariant $^{\bar k}\WF(X)$ is called simply the `wavefront set' of $X$). 

In \cite[Section 2.2.3]{okada2021wavefront} the third author has introduced a third type of wavefront set for depth-$0$ representations, called
the \emph{canonical unramified wavefront set}. This invariant is a natural refinement of $\hphantom{ }^{\bar{\sfk}}\WF(X)$. We will now define $^K\WF(X)$ and explain how to compute it. 

Recall from Equation \ref{eq:lift} the lifting map $\mathscr L$.
For every face $c \subseteq \mathcal{B}(\mathbf{G})$, the space of invariants $X^{\bfU_c(\mf o)}$ is a (finite-dimensional) $\bfL_c(\mathbb F_q)$-representation. Let $\WF(X^{\bfU_c(\mf o)}) \subseteq \cN^{\bfL_c}_o(\overline{\mathbb F}_q)$ denote the Kawanaka wavefront set \cite{kawanaka} and let 
\begin{equation}
    ^K\WF_c(X) := \{[\mathscr L (c,\OO)] \mid \OO\in \WF(X^{\bfU_{c}(\mf o)})\} \quad \subseteq \cN_o(K).
\end{equation}

%For simplicity we define $^K\WF$ in the spirit of \cite[Theorem 5.4]{okada2021wavefront} instead of the more complicated definition given in the paper.
%We also define it to live on $\mathcal N_o(K)/\sim_A$ instead of on $\mathcal N_o(K)$.

\begin{definition}\label{def:CUWF}
    Let $X$ be a depth-0 representation of $\bfG^\omega(\sfk)$.
    The \emph{canonical unramified wavefront set} of $X$ is
    \begin{equation}
        ^K\WF(X) := \max \{\hphantom{ }^K\WF_c(X) \mid  c\subseteq \mathcal B(\bfG^\omega,\sfk)\} \quad \subseteq \cN_o(K)/\sim_A.
    \end{equation}
\end{definition}

Fix $\bfT,\bfT_1,c_0,x_0,\Delta$ as at the end of section \ref{sec:btbuilding}.
By \cite[Lemma 2.36]{okada2021wavefront} we have that
\begin{equation}
    ^K\WF(X) = \max \{\hphantom{ }^K\WF_{c(J)}(X) \mid J \in \bfP^\omega(\tilde\Delta)\}.
\end{equation}
We will often want to view $^K\WF(X)$ and $^K\WF_c(X)$ as subsets of $\mathcal N_{o,\bar c}$ using the identification $\bar\theta_{\bfT_1}$ from Theorem \ref{thm:unramclasses}.
We will write 
$$^{K}\WF(X,\CC) := \bar \theta_{\bfT_1}(\hphantom{ }^K\WF(X)), \quad \hphantom{ }^{K}\WF_c(X,\CC) := \bar \theta_{\bfT_1}(\hphantom{ }^K\WF_c(X)).$$
We will also want to view $\hphantom{ }^{\bar{\sfk}}\WF(X)$, which is naturally contained in $\mathcal N_o(\bar k)$, as a subset of $\mathcal{N}_o$ and so will write 
$$\hphantom{ }^{\bar{\sfk}}\WF(X,\CC) := \Lambda^{\bar \sfk}(\hphantom{ }^{\bar{\sfk}}\WF(X)).$$
By \cite[Theorem 2.37]{okada2021wavefront}, if $^{K}\WF(X)$ is a singleton, then $\hphantom{ }^{\bark}\WF(X)$ is also a singleton and 
\begin{equation}
    \label{eq:wfcompatibility}
    \hphantom{ }^{K}\WF(X,\CC) = (\hphantom{ }^{\bark}\WF(X,\CC),\bar C).
\end{equation}
for some conjugacy class $\bar C$ in $\bar A(\hphantom{ }^\bark\WF(X,\CC))$.

\subsection{Isogenies}
Let $f:\bfH'\to \bfH$ be an isogeny of connected reductive groups defined over $\sfk$.
Let $f_k:\bfH'(k)\to \bfH(k)$ denote the corresponding homomorphism of $k$-points.
We note that $\mathcal N^{\bfH}_{o,\bar c} \simeq \mathcal N^{\bfH'}_{o,\bar c}$ and so we can compare the canonical unramified wavefront sets of representations of the two groups.
In an upcoming paper \cite{isog} the third author proves the following result about the behaviour of the canonical unramified wavefront set under isogeny.
\begin{lemma}
    \label{lem:isogeny}
    Let $X$ be an irreducible admissible depth-0 representation of $\bfH(k)$ and write $X'$ for the representation of $\bfH'(k)$ obtained by pulling back along $f_k$.
    Then $X'$ decomposes as a finite sum of irreducible admissible representations $X' = \bigoplus_i X_i'$ and $^K\WF(X) = \hphantom{ }^K\WF(X_i')$ for all $i$.
\end{lemma}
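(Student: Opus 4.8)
The plan is to reduce the statement to a purely local/combinatorial comparison at each face of the building, where the behavior of depth-0 representations under isogeny is controlled by well-understood finite-group-of-Lie-type phenomena. First I would fix compatible data on both sides: since $f\colon \bfH'\to\bfH$ is an isogeny, it induces an isomorphism of buildings $\mathcal B(\bfH',\sfk)\xrightarrow{\sim}\mathcal B(\bfH,\sfk)$ (isogenies do not change the adjoint group, hence not the affine root system), and for each face $c$ it carries $\bfU'_c(\mathfrak o)$ isomorphically onto $\bfU_c(\mathfrak o)$ and induces an isogeny $\bfL'_c\to\bfL_c$ of reductive quotients over $\mathbb F_q$. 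Consequently the pro-unipotent invariants match: $(X')^{\bfU'_c(\mathfrak o)}$ is exactly the pullback of $X^{\bfU_c(\mathfrak o)}$ along $\bfL'_c(\mathbb F_q)\to\bfL_c(\mathbb F_q)$. The decomposition $X'=\bigoplus_i X'_i$ into finitely many irreducibles follows from the fact that $f_k$ has image of finite index and central cokernel (Clifford theory); this also shows each $(X'_i)^{\bfU'_c(\mathfrak o)}$ is a summand of $(X')^{\bfU'_c(\mathfrak o)}$, and that these summands, as $i$ varies, are obtained by tensoring with characters of the (finite) component group, so they all have the ``same'' Kawanaka wavefront set after transporting along $\mathcal N_o^{\bfL'_c}(\overline{\mathbb F}_q)\simeq \mathcal N_o^{\bfL_c}(\overline{\mathbb F}_q)$.

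The second step is the finite-group input: for an irreducible representation $V$ of $\bfL_c(\mathbb F_q)$ and its pullback $V'=\bigoplus_j V'_j$ to $\bfL'_c(\mathbb F_q)$ along an isogeny, one has $\WF(V'_j)=\WF(V)$ under the identification of nilpotent orbits induced by the isogeny, for every $j$. This is because the Kawanaka wavefront set is defined via generalized Gelfand--Graev representations attached to nilpotent orbits, and an isogeny identifies the unipotent radicals $U_{\OO}$ and the relevant characters on both sides up to the central/component-group ambiguity, which does not affect which $\OO$ is maximal with $\langle V, \Gamma_{\OO}\rangle\neq 0$. (Here I would cite the relevant compatibility of GGGRs under isogeny; if no clean reference exists, this is the one genuinely technical lemma that must be proved, e.g. by reducing first to the case of a central isogeny of simply-connected-by-adjoint type and then to a product of a torus with a semisimple group, using Lusztig's classification of unipotent GGGR-decompositions.)

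The third step assembles the local data: by Definition \ref{def:CUWF} and the displayed reduction to faces $c(J)$, $J\in\bfP^\omega(\tilde\Delta)$, the canonical unramified wavefront set is the $\le_A$-maximum over faces of $\{[\mathscr L(c,\OO)]\mid \OO\in\WF(X^{\bfU_c(\mathfrak o)})\}$. Since the isogeny identifies the index set of faces, identifies each $\bfL_c$-nilpotent set, commutes with the lifting map $\mathscr L$ (both are built from the same building-theoretic data and the isomorphism $\mathcal N^{\bfH}_{o,\bar c}\simeq \mathcal N^{\bfH'}_{o,\bar c}$, which I would check is compatible with the maps $\mf L_{x_0}$, $\mathbb L$, and $\mathscr L$ of Section \ref{subsec:nilpotent}), and since by Step 2 the per-face Kawanaka wavefront sets agree for $X$ and each $X'_i$, the sets $^K\WF_c(X)$ and $^K\WF_c(X'_i)$ coincide in $\mathcal N_o(K)/\sim_A$ for every $c$ and every $i$. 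Taking the maximum over $c$ gives $^K\WF(X)={}^K\WF(X'_i)$.

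The main obstacle is Step 2: controlling the Kawanaka wavefront set under isogeny of finite reductive groups. Everything else is a bookkeeping argument that the building-theoretic machinery of \cite{okada2021wavefront} is natural in isogenies (which is essentially forced, since it is all manufactured from the adjoint group and the apartment), but the claim that $\WF$ is insensitive to the simply-connected-to-adjoint ambiguity on the reductive quotient genuinely uses that generalized Gelfand--Graev representations, and Lusztig's formula \cite[Theorem 11.2]{lusztigunip} for $\WF$ on unipotent representations, transform predictably. I expect the cleanest route is to invoke the isogeny-compatibility of GGGRs together with the fact that the relevant decomposition numbers are governed by the action of a central finite group, so no orbit can be gained or lost.
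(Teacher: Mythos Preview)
The paper does not contain a proof of this lemma; it is stated with the preface ``In an upcoming paper \cite{isog} the third author proves the following result\ldots'' and no argument is given. So there is nothing in the present paper to compare your proposal against.

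That said, your outline is a reasonable sketch of what such a proof should look like, and you have correctly isolated the genuine content: invariance of the Kawanaka wavefront set under isogeny at the level of the finite reductive quotients $\bfL_c'(\mathbb F_q)\to\bfL_c(\mathbb F_q)$. One point to tighten: your description of how the summands $X_i'$ are related (``tensoring with characters of the component group'') is not quite the right mechanism. The $X_i'$, viewed as representations of the image $f_k(\bfH'(\sfk))$, are the $\bfH(\sfk)$-conjugates of a single irreducible constituent of $X|_{f_k(\bfH'(\sfk))}$; they need not be character twists of one another on $\bfH'(\sfk)$. What you actually need is that this $\bfH(\sfk)$-conjugacy, transported to the building, permutes faces and hence matches the local invariants $^K\WF_c(X_i')$ among themselves, while the isogeny identifies each of these with $^K\WF_c(X)$. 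You should also be explicit that the decomposition $(X')^{\bfU'_c}=\bigoplus_i (X_i')^{\bfU'_c}$ is a decomposition of the pullback of $X^{\bfU_c}$, but not in general into irreducible $\bfL'_c(\mathbb F_q)$-modules, so the finite-group statement you need is slightly stronger than ``irreducible pullbacks have the same $\WF$'': you need it for each of the summands $(X_i')^{\bfU'_c}$, which may themselves be reducible. The $\bfH(\sfk)$-conjugacy of the $X_i'$ is what ultimately forces these summands to have the same wavefront set.
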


\subsection{Depth-0 representations}
Let $\omega\in \Omega$.
Recall that a smooth irreducible representation $X$ of $\bfG^\omega(\sfk)$ has depth-0 if $X^{\bfU_c(\mf o)}\ne 0$ for some $c\subseteq \mathcal B(\bfG^\omega,\sfk)$.
Write $\Pi^{0}(\bfG^\omega(\sfk))$ for the subset of $\Pi(\bfG^\omega(\sfk))$ consisting of depth-0 representations.
Let 
$$S^\omega := \{(c,\sigma):c\subseteq \mathcal B(\bfG^\omega,\sfk),\sigma \text{ a cuspidal representation of } \bfL_c(\mathbb F_q)\}$$
and for $(c_1,\sigma_1),(c_2,\sigma_2)\in S^\omega$ write $(c_1,\sigma_1)\sim (c_2,\sigma_2)$ if they are associate in the sense of \cite[Section 5]{moyprasadJ}.
Suppose $(c_1,\sigma_1),(c_2,\sigma_2)\in S^\omega$ are such that $\sigma_i$ is a subrepresentation of $X^{\bfU_{c_i}(\mf o)}$ for $i\in\{1,2\}$.
Then by \cite[Theorem 5.2]{moyprasadJ}, we have that $(c_1,\sigma_1)\sim(c_2,\sigma_2)$.
Thus there is a well defined map 
$$\mathrm{supp}:\Pi^{0}(\bfG^\omega(\sfk))\to S^\omega/\sim$$
which attaches to $X$ the well-defined association class of $(c,\sigma)\in S^\omega$ where $(c,\sigma)$ is such that $\sigma$ appears as a subrepresentation of $X^{\bfU_c(\mf o)}$.
This association class called the unrefined minimal K-type of $X$, but we write $\supp(X)$ for brevity.

Note that for $(c_1,\sigma_1),(c_2,\sigma_2)\in S^\omega$, if $(c_1,\sigma_1)\sim(c_2,\sigma_2)$ then $(c_1,\WF(\sigma_1))\sim_K (c_2,\WF(\sigma_2))$ where $\WF$ denotes the Kawanaka wavefront set and $\sim_K$ is the equivalence relation defined in \cite[Section 1.1.1]{okada2021wavefront}.
In particular, since $\mathscr L$ is constant on $\sim_K$-classes we have that
\begin{equation}
    \label{eq:supporbit}
    \mathscr L(c_1,\WF(\sigma_1)) = \mathscr L(c_2,\WF(\sigma_2)).
\end{equation}
For $X\in \Pi^{0}(\bfG^\omega(\sfk))$ define $^K\WF_{\mathrm{supp}}(X)$ to be $\sim_A$-class of the well-defined orbit in Equation \ref{eq:supporbit} applied to the association class of $\mathrm{supp}(X)$.
We will write 
$$\WFsupp(X,\CC) := \bar\theta_{\bfT_1}(\WFsupp(X)).$$
\begin{lemma}
    For $X\in \Pi^{0}(\bfG^\omega(\sfk))$ we have that
    $$\WFsupp(X)\le_A \OO$$
    for some $\OO\in \hphantom{ }^K\WF(X)$.
    In particular, when $^K\WF(X)$ is a singleton we have
    $$\WFsupp(X)\le_A \hphantom{ }^K\WF(X).$$
\end{lemma}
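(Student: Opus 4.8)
The lemma is mostly a matter of combining the definition of $\WFsupp(X)$ with the strict monotonicity of $[\bullet]\circ\mathscr L$ established in \cite[Corollary 4.7, Lemma 5.3]{okada2021wavefront}. I would begin by fixing a representative $(c,\sigma)\in S^\omega$ of $\supp(X)$, so that $\sigma$ occurs as a subrepresentation of the $\bfL_c(\mathbb F_q)$-module $X^{\bfU_c(\mf o)}$. Since $\sigma$ is a subrepresentation, the multiplicity of any generalized Gelfand--Graev representation $\Gamma_\OO$ in $\sigma$ is at most its multiplicity in $X^{\bfU_c(\mf o)}$ (passing to the larger module can only increase such multiplicities); hence, from the characterization of the Kawanaka wavefront set via these multiplicities, every orbit in $\WF(\sigma)$ is dominated by some orbit in $\WF(X^{\bfU_c(\mf o)})$. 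Because $\sigma$ is cuspidal, $\WF(\sigma)$ is a single orbit $\OO_\sigma$ (this is precisely what makes the orbit in Equation~(\ref{eq:supporbit}) well-defined, and $\WFsupp(X)$ is by definition the $\sim_A$-class $[\mathscr L(c,\OO_\sigma)]$). Thus $(c,\OO_\sigma)\le(c,\OO'')$ in $\mathcal I_o$ for some $\OO''\in\WF(X^{\bfU_c(\mf o)})$.

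Next I would apply the strictly increasing map $[\bullet]\circ\mathscr L\colon(\mathcal I_o,\le)\to(\mathcal N_o(K)/\sim_A,\le_A)$ to this inequality, obtaining $\WFsupp(X)=[\mathscr L(c,\OO_\sigma)]\le_A[\mathscr L(c,\OO'')]$. By definition $[\mathscr L(c,\OO'')]\in{}^K\WF_c(X)$, and since $c\subseteq\mathcal B(\bfG^\omega,\sfk)$ this is one of the sets over which the maximum defining $^K\WF(X)$ is taken (Definition~\ref{def:CUWF}). As $\mathcal N_o(K)/\sim_A$ is a finite poset, $[\mathscr L(c,\OO'')]\le_A\OO$ for some maximal element $\OO\in{}^K\WF(X)$, and transitivity of $\le_A$ yields $\WFsupp(X)\le_A\OO$. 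The ``in particular'' clause is then the case $|{}^K\WF(X)|=1$.

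The only point needing a genuine, if small, argument is the comparison $\WF(\sigma)\le\WF(X^{\bfU_c(\mf o)})$: one must unwind the definition of the Kawanaka wavefront set for the (a priori reducible) module $X^{\bfU_c(\mf o)}$ and check that restricting to a subrepresentation cannot enlarge it, together with the fact that $\WF(\sigma)$ is a singleton for cuspidal $\sigma$ -- a standard property of cuspidal representations of finite reductive groups that is already implicit in the well-definedness of $\WFsupp$. Everything after that is formal: one application of the strictly increasing map $[\bullet]\circ\mathscr L$, followed by the observation that the face $c$ supplied by $\supp(X)$ is among the faces indexing the maximum in Definition~\ref{def:CUWF}.
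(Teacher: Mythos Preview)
Your proof is correct and follows essentially the same route as the paper's: choose a representative $(c,\sigma)$ of $\supp(X)$, use that $\sigma\subseteq X^{\bfU_c(\mf o)}$ to get $\WF(\sigma)\le\OO''$ for some $\OO''\in\WF(X^{\bfU_c(\mf o)})$, apply the monotonicity of $[\bullet]\circ\mathscr L$, and finish with the definition of $^K\WF(X)$ as a maximum over faces. The paper leaves the inequality $\WF(\sigma)\le\WF(X^{\bfU_c(\mf o)})$ implicit, whereas you spell it out via Gelfand--Graev multiplicities; otherwise the arguments match step for step.
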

\begin{proof}
    Let $\supp(X) = [(c,\sigma)]$.
    Then $\sigma$ is a subrepresentation of $X^{\bfU_c(\mf o)}$ and so $\WF(\sigma)\le \OO$ for some $\OO\in \WF(X^{\bfU_c(\mf o)})$.
    Thus
    $$[\mathscr L(c,\WF(\sigma))] \le_A [\mathscr L(c,\OO)] \in \hphantom{ }^K\WF_c(X).$$
    The result then follows from the fact that
    $$^K\WF(X) = \max\{\hphantom{ }^K\WF_c(X):c\subseteq \mathcal B(\bfG^\omega,\sfk)\}.$$
\end{proof}
\begin{rmk}
    In fact the analogous result for the \emph{unramified wavefront set} (as defined in \cite[Section 1.0.5]{okada2021wavefront}), holds as well.
\end{rmk}
For an Iwahori-spherical representation $X$ this inequality says nothing because $\supp(X) = [(c_0,triv)]$ and so $\WFsupp(X,\CC) = (\{0\},1)$.
For supercuspidal representations however, the inequality is in fact an equality.
We now proceed to prove this.

\begin{lemma}
    \label{lem:wf}
    Let $X$ be a depth-0 supercuspidal representation.
    Let $c'$ be a face of $\mathcal B(\bfG^\omega,\sfk)$ with $X^{\bfU_{c'}(\mf o)}\ne 0$ and suppose that $\tau$ is an irreducible constituent of $X^{\bfU_{c'}(\mf o)}$.
    Then $\tau$ is a cupsidal representation of $\bfL_{c'}(\mathbb F_q)$ and in particular $[(c',\tau)] = \mathrm{supp}(X)$.
\end{lemma}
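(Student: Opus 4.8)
The plan is to show that any irreducible constituent $\tau$ of $X^{\bfU_{c'}(\mf o)}$ is cuspidal, using the assumption that $X$ is supercuspidal. The key mechanism is the standard relationship between parabolic induction for $\bfG^\omega(\sfk)$ (via the Iwahori/parahoric-level Jacquet functors) and parabolic induction for the finite reductive quotients $\bfL_{c'}(\mathbb F_q)$. Suppose for contradiction that $\tau$ is \emph{not} cuspidal. Then there is a proper $\mathbb F_q$-parabolic subgroup $\bar{\bfQ}$ of $\bfL_{c'}$ with Levi $\bar{\bfM}$ such that $\tau$ appears in the Harish-Chandra induction $\mathrm{Ind}_{\bar{\bfQ}}^{\bfL_{c'}(\mathbb F_q)}(\tau_0)$ for some irreducible $\tau_0$ of $\bar{\bfM}(\mathbb F_q)$; equivalently, the Jacquet module $\tau_{\bar{\bfU}}$ (with respect to the unipotent radical $\bar{\bfU}$ of $\bar{\bfQ}$) is nonzero.

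Next I would translate this into a statement about $X$. The preimage in $\bfP_{c'}^\dagger(\mf o)$ of $\bar{\bfQ}(\mathbb F_q)$ is (the $\mf o$-points of) a parahoric-type subgroup, and one can find a proper $\sfk$-parabolic subgroup $\bfP = \bfM\bfN$ of $\bfG^\omega$ whose intersection with this parahoric induces $\bar{\bfQ}$ on the reductive quotient, with $\bfM$ inducing $\bar{\bfM}$. The point of $c'$ lies in (a translate of) the building of $\bfM$, and $\bfM$ is again a connected reductive group over $\sfk$ with $\bfL_{c'\cap\mathcal B(\bfM)} = \bfM$-analogue of $\bar{\bfM}$. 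Using the exactness of the Jacquet functor $r_{\bfN}$ and the compatibility between taking $\bfU_{c'}(\mf o)$-invariants and taking $\bar{\bfU}$-Jacquet modules at the finite level — this is the content of the Moy--Prasad / Morris theory of types, cf.\ \cite{moyprasadJ} — one gets that $r_{\bfN}(X)$ has a nonzero $\bfU_{c''}(\mf o)$-fixed vector for an appropriate face $c''$ of $\mathcal B(\bfM,\sfk)$, where this fixed space contains $\tau_0$ (or something in its inertial class). In particular $r_{\bfN}(X) \neq 0$, contradicting the supercuspidality of $X$. Hence no such $\bar{\bfQ}$ exists, so $\tau$ is cuspidal.

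Finally, once $\tau$ is cuspidal we have $(c',\tau)\in S^\omega$ with $\tau$ a subrepresentation (constituent, and $\bfL_{c'}(\mathbb F_q)$ is a finite group so constituents are subrepresentations) of $X^{\bfU_{c'}(\mf o)}$; by the discussion preceding the lemma (the uniqueness of unrefined minimal $K$-types, \cite[Theorem~5.2]{moyprasadJ}), any such pair is associate to the pair defining $\supp(X)$, so $[(c',\tau)] = \supp(X)$.

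The main obstacle is the second step: making precise the compatibility between the $p$-adic Jacquet functor and the finite-group Jacquet/Harish-Chandra functor at the level of parahoric invariants, and in particular correctly matching up the face $c'$ with a face in the building of the Levi $\bfM$ and identifying which parahoric of $\bfM$ carries $\tau_0$. This is essentially a bookkeeping argument in Bruhat--Tits theory together with a version of ``second adjointness / geometric lemma'' reasoning at depth zero; I expect it can be extracted cleanly from \cite{moyprasadJ} (or from Morris's work on parahoric-level Hecke algebras), but it is the place where care is needed. Everything else — the contrapositive setup, the appeal to exactness of $r_{\bfN}$, and the final citation of the $K$-type uniqueness theorem — is routine.
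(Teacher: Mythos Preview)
Your overall strategy—reduce to showing $\tau$ is cuspidal, then quote Moy--Prasad uniqueness for the final identification—is sound, and your last paragraph matches the paper. But the mechanism you propose for the cuspidality step is not the right one, and the paper's argument is both different and considerably simpler.

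The problematic step is the claim that a proper parabolic $\bar{\bfQ}\subset\bfL_{c'}$ can be lifted to a proper $\sfk$-parabolic $\bfP=\bfM\bfN$ of $\bfG^\omega$ with the stated compatibility. That is not how parabolics of the finite reductive quotient arise from Bruhat--Tits theory. A parabolic of $\bfL_{c'}$ corresponds not to a parabolic of $\bfG^\omega(\sfk)$ but to a \emph{smaller parahoric}: it is (conjugate to) $\bfP_{c''}(\mf o)/\bfU_{c'}(\mf o)$ for a face $c''$ with $c'\subseteq\overline{c''}$, and its Levi quotient is $\bfL_{c''}(\mathbb F_q)$. There is in general no Levi subgroup of $\bfG^\omega$ playing the role you assign to $\bfM$ (think of the vertex of $G_2$ with reductive quotient of type $A_2$: $G_2$ has no Levi of type $A_2$). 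So the ``compatibility with $r_{\bfN}$'' you hope to extract from Moy--Prasad is not available in the form you want.

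The paper instead stays entirely inside the building and never touches a Jacquet functor of $\bfG^\omega(\sfk)$. Let $(M,\tau')$ be cuspidal support of $\tau$ in $\bfL_{c'}(\mathbb F_q)$, and realise $M$ as $\bfL_{c''}(\mathbb F_q)$ for a face $c''$ with $c'\subseteq\overline{c''}$. Then
\[
\tau'\ \subseteq\ \tau^{U}\ \subseteq\ \bigl(X^{\bfU_{c'}(\mf o)}\bigr)^{\bfU_{c''}(\mf o)/\bfU_{c'}(\mf o)}\ =\ X^{\bfU_{c''}(\mf o)},
\]
so $(c'',\tau')$ is an unrefined minimal $K$-type of $X$, hence associate to $\supp(X)=[(c,\sigma)]$ by \cite[Theorem~5.2]{moyprasadJ}. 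The only use of supercuspidality is that $c$ is a \emph{minimal} face; associates of minimal faces are minimal, so $c''$ is minimal, and $c'\subseteq\overline{c''}$ then forces $c'=c''$. Hence $M=\bfL_{c'}$ and $\tau=\tau'$ is cuspidal.

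In short: the paper replaces your ``lift to a parabolic of $\bfG^\omega$ and compute $r_{\bfN}(X)$'' with ``pass to the adjacent face $c''$ and compute $X^{\bfU_{c''}}$'', which is an equality of invariants rather than a delicate Jacquet-module compatibility. Your route could perhaps be salvaged by first passing to $(c'',\tau')$ as above and then invoking the Moy--Prasad/Morris characterisation of depth-$0$ supercuspidals as those whose minimal $K$-type sits on a minimal face, but that is a detour through a harder theorem than the one-line building argument the paper uses.
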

\begin{proof}
    Let $\mathrm{supp}(X) = [(c,\sigma)]$ and $c',\tau$ be as in the statement of the lemma.
    Let $[(M,\tau')]$ be the cuspidal data for $\tau$ (i.e. a conjugacy class of Levi of $\bfL_{c'}(\mathbb F_q)$ and cuspidal representation of said Levi).
    In particular, if $M$ is included into any parabolic $P$ so that $P$ has Levi decomposition $P=MU$, then $\tau'$ is a subrepresentation of $\tau^U$.
    Now, all of the parabolics of $\bfL_{c'}(\mf o)$ are conjugate to a parabolic of the form $\bfP_{c''}(\mf o)/\bfU_{c'}(\mf o)$ where $c'\subseteq \overline{c''}$.
    Thus (conjugating $M$ appropriately) we can find a $c''$ such that $M$ is a Levi factor of $\bfP_{c''}(\mf o)/\bfU_{c'}(\mf o)$ and so $\bfL_{c''}(\mathbb F_q)\simeq M$.
    We thus have that 
    $$\tau' \subseteq \tau^U \subseteq (X^{\bfU_{c'}(\mf o)})^{\bfU_{c''}(\mf o)/\bfU_{c'}(\mf o)} = X^{\bfU_{c''}(\mf o)}.$$
    In particular $(c'',\tau')$ is an unrefined minimal $K$-type for $X$.
    Thus by \cite[Theorem 5.2]{moyprasadJ}, we have that $(c'',\tau')\sim(c,\sigma)$.
    In particular $c''$ is also a minimal face and so $c'' = c'$ and $\tau = \tau'$.
    Thus $\tau$ is a cuspdial representation of $\bfL_{c'}(\mathbb F_q)$ and $[(c,\sigma)] = [(c',\tau)]$.
\end{proof}

\begin{prop}
    \label{prop:wfsupp}
    Let $X$ be a depth-0 supercuspidal representation of $\bfG^\omega(\sfk)$.
    Then 
    $$\WFsupp(X) = \hphantom{ }^K\WF(X).$$
\end{prop}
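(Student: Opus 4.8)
The plan is to prove both inequalities $\WFsupp(X) \leq_A \hphantom{ }^K\WF(X)$ and $\hphantom{ }^K\WF(X) \leq_A \WFsupp(X)$, after first establishing that $\hphantom{ }^K\WF(X)$ is a singleton (so that the partial order $\le_A$ makes sense as a comparison between these two elements of $\mathcal N_o(K)/\!\sim_A$). The first inequality is already available from the lemma immediately preceding the statement, so the real content is the reverse. The key observation is Lemma \ref{lem:wf}: for a depth-$0$ supercuspidal $X$, at \emph{every} face $c'$ with $X^{\bfU_{c'}(\mf o)} \neq 0$, the representation $X^{\bfU_{c'}(\mf o)}$ is a sum of cuspidal representations of $\bfL_{c'}(\mathbb F_q)$, all lying in the single association class $\supp(X) = [(c',\tau)]$. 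Hence the nonzero invariant spaces occur only at faces $c'$ whose reductive quotient $\bfL_{c'}(\mathbb F_q)$ admits a cuspidal representation, and such $c'$ are precisely the \emph{minimal} faces in their association class.

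First I would observe that for a cuspidal representation $\tau$ of a finite reductive group, the Kawanaka wavefront set $\WF(\tau)$ is a single nilpotent orbit (this is part of Lusztig's/Kawanaka's theory, and is exactly what makes the finite-group input clean); more importantly, $\WF(\tau)$ depends only on the association class, so $\WF(X^{\bfU_{c'}(\mf o)})$ is the orbit determined by $\supp(X)$ via Equation \ref{eq:supporbit}. Therefore $\hphantom{ }^K\WF_{c'}(X) = \{[\mathscr L(c',\WF(\tau))]\}$ is a singleton equal to $\WFsupp(X)$ for every face $c'$ with $X^{\bfU_{c'}(\mf o)} \neq 0$, using that $\mathscr L$ (and then $[\bullet]\circ\mathscr L$) is constant on $\sim_K$-classes together with Equation \ref{eq:supporbit}. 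Taking the maximum over all such faces in Definition \ref{def:CUWF}, every term in the max is the \emph{same} class $\WFsupp(X)$, so $\hphantom{ }^K\WF(X) = \{\WFsupp(X)\}$ is a singleton and equals $\WFsupp(X)$. This simultaneously gives both inequalities and the singleton claim at once, so the two-sided argument I sketched at the start collapses into this single computation.

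The step I expect to be the main obstacle is justifying that the Kawanaka wavefront set of a cuspidal representation of $\bfL_{c'}(\mathbb F_q)$ is a single orbit and is an invariant of the association class — i.e. that it matches across the different faces $c'$ realizing $\supp(X)$. Concretely one must check that if $(c_1,\sigma_1) \sim (c_2,\sigma_2)$ in $S^\omega$ then $(c_1,\WF(\sigma_1)) \sim_K (c_2,\WF(\sigma_2))$; this is asserted in the paragraph before Lemma \ref{lem:wf} (citing \cite{okada2021wavefront}), so strictly I may invoke it, but it is the conceptual heart and I would want to state it carefully. A secondary point to handle with care: Lemma \ref{lem:wf} tells us $X^{\bfU_{c'}(\mf o)}$ is a sum of cuspidals all in class $\supp(X)$, but I should note that $\WF$ of a \emph{direct sum} is the maximum of the $\WF$'s of the constituents, which here is just the common value since all constituents share the same Kawanaka wavefront orbit. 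Once these are in place, the remaining manipulations — commuting $\max$ past a constant function, and unwinding the definitions of $\hphantom{ }^K\WF_c$ and $\WFsupp$ — are purely formal.
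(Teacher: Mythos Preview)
Your approach is essentially the paper's: invoke Lemma \ref{lem:wf} to see that at every face $c$ with $X^{\bfU_c(\mf o)}\neq 0$ all irreducible constituents are cuspidal and lie in $\supp(X)$, deduce $^K\WF_c(X)=\WFsupp(X)$ for those $c$, and take the maximum. The only point you omit that the paper makes explicit is the trivial case of faces with $X^{\bfU_c(\mf o)}=0$, where $^K\WF_c(X,\CC)=(\{0\},1)$ and hence does not affect the maximum; once you add that sentence the argument is complete. Your worries about singleton-ness of the cuspidal Kawanaka wavefront set and about $\WF$ of a direct sum are exactly the implicit steps behind the paper's terse ``Thus we must have that $^K\WF_c(X)=\WFsupp(X)$,'' so spelling them out is fine but not a deviation in strategy.
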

\begin{proof}
    Recall that
    $$^K\WF(X) = \max\{\hphantom{ }^K\WF_c(X):c\subseteq \mathcal B(\bfG^\omega,\sfk)\}.$$
    Suppose $c\subseteq \mathcal B(\bfG^\omega,\sfk)$ is such that $X^{\bfU_c(\mf o)}\ne 0$.
    Then for any $\tau$ an irreducible constituent of $X^{\bfU_c(\mf o)}$ we have by Lemma \ref{lem:wf} that $[(c,\tau)] = \supp(X)$.
    Thus we must have that $^K\WF_c(X) = \WFsupp(X)$.
    If $c\subseteq \mathcal B(\bfG^\omega,\sfk)$ is such that $X^{\bfU_c(\mf o)} = 0$ then $^K\WF_c(X,\CC) = (\{0\},1)$.
    Thus 
    $$^K\WF(X) = \WFsupp(X).$$
\end{proof}
\begin{rmk}
    Although we have only stated the results for inner twists of the split group, the proposition in fact holds for all connected reductive groups (with identical proof).
\end{rmk}
Note that since $\WFsupp$ is a singleton by construction, this establishes Moeglin and Waldspurger's conjecture for all depth-0 supercuspidal representations for connected reductive groups.
\subsection{Unipotent supercuspidal representations}
\label{s:unip-cusp}
Let 
$$S_{unip}^\omega := \{(c,\sigma)\in S^\omega : \sigma \text{ is \emph{unipotent}}\}.$$

\begin{definition}\label{def:unipotent} Let $X$ be an irreducible $\mathbf{G}^\omega(\mathsf{k})$-representation. We say that $X$ has \emph{unipotent cuspidal support} if $\supp(X)\in S_{unip}^\omega$. Write $\Pi^{\mathsf{Lus}}(\bfG^\omega(\mathsf k))$ for the subset of $\Pi(\bfG(\mathsf k))$ consisting of all such representations.
\end{definition}

Call a supercuspidal representation unipotent if it has cuspidal unipotent support.
By \cite{Morris1996}, the irreducible unipotent supercuspidal representations are all obtained by compact induction $\mathrm{ind}_{\mathbf{P}_c^\dagger(\mf o)}^{\bfG(\mathsf k)}(\sigma^\dagger)$, where $\mathbf{P}_c(\mf o)$ is a maximal parahoric subgroup and $\sigma^\dagger$ contains an irreducible Deligne-Lusztig cuspidal unipotent representation $\sigma$ of ${\mathbf L}_c(\mathbb F_q)$ upon restriction to $\bfL_c(\mathbb F_q)$. 
In particular $X\in \Pi^{\mathsf{Lus}}(\bfG^\omega(\sfk))$ is supercuspidal if and only if $\mathrm{supp}(X) = [(c,\sigma)]$ where $c$ is a minimal face of $\mathcal B(\bfG^\omega,\sfk)$.

\subsection{Langlands classification of unipotent supercuspidal representations}\label{subsec:LLC}
Let $W_{\mathsf{k}}$ be the Weil group of $\mathsf k$ with inertia subgroup $I_{\mathsf k}$ and set $W_{\mathsf k}'=W_{\mathsf k}\times SL(2,\CC)$. We will think of a \emph{Langlands parameter} for $\bfG$  as  a continuous morphisms $\varphi: W_{\mathsf k}'\to G^\vee$ such that $\varphi(w)$ is semisimple for each $w\in W_{\mathsf k}$ and the restriction of $\varphi$ to $SL(2,\CC)$ is algebraic. A Langlands parameter $\varphi$ is called \emph{unramified} if $\varphi(I_{\mathsf k})=\{1\}$. Let $G^\vee(\varphi)$ denote the centralizer of $\varphi(W_{\mathsf k}')$ in $G^\vee$. Define
\[Z^1_{G^\vee_{\mathsf{sc}}}(\varphi)=\text{ preimage of }G^\vee(\varphi)/Z(G^\vee)\text{ under the projection } G^\vee_{\mathsf{sc}}\to G^\vee_{\ad},
\]
and let $A^1_\varphi$ denote the component group of $Z^1_{G^\vee_{\mathsf{sc}}}(\varphi)$. An \emph{enhanced Langlands parameter} is pair $(\varphi,\rho)$, where $\rho \in\mathrm{Irr}(A^1_\varphi).$ A parameter $(\varphi,\rho)$ is called \emph{$\bfG^{\omega}$-relevant} (recall that $\bfG^{\omega}$ is an inner twist of the split form, $\omega\in \Omega_{\ad}$) if $\rho$ acts on $Z(G^\vee_{\mathsf{sc}})$ by a multiple of the character $\zeta_\omega$.

Define the elements
\[s_\varphi=\varphi(\mathrm{Frob},1),\quad u_\varphi=\varphi(1,\begin{pmatrix} 1&1\\0&1\end{pmatrix}).
\]
Following \cite{AMSol2018}, consider the possibly disconnected reductive group 
\[\mathcal G_\varphi=Z^1_{G^\vee_{\mathsf{sc}}}(\varphi(W_{\mathsf{k}})),
\]
which is defined analogously to $Z^1_{G^\vee_{\mathsf{sc}}}(\varphi)$. Then $u_\varphi\in \mathcal G_\varphi^\circ$ and by \cite[(92)]{AMSol2018}
\[A^1_\varphi\cong \mathcal G_\varphi(u_\varphi)/\mathcal G_\varphi(u_\varphi)^\circ.
\]
An enhanced Langlands parameter $(\varphi,\rho)$ is called \emph{discrete} if $G^{\vee}(\varphi)$ does not contain a nontrivial torus (this notion is independent of $\rho$). A discrete parameter is called \emph{cuspidal} if $(u_\varphi,\rho)$ is a cuspidal pair. This means that every $\rho^\circ$ which occurs in the restriction of $\rho$ to $A_{\mathcal G_\varphi^\circ}(u_\varphi)$ defines a $\mathcal G_\varphi^\circ$-equivariant local system on the $\mathcal G_\varphi^\circ$-conjugacy class of $u_\varphi$ which is cuspidal in the sense of Lusztig. 

A Langlands correspondence for unipotent supercuspidal representations has been obtained by \cite{Morris1996} when $\bfG$ is simple and adjoint, see also \cite{Lusztig-IMRN}. For arbitrary reductive $K$-split groups, this correspondence is available by \cite{FengOpdamSolleveld2021,FengOpdam2020}. Let $\mathrm{Irr}(\bfG^{\omega}(\sfk))_{\mathrm{cusp,unip}}$ denote the set of equivalence classes of irreducible unipotent supercuspidal $\bfG^{\omega}(\sfk)$-representations. Let $\Phi(G^\vee)_{\mathrm{cusp,nr}}^\omega$ denote the set of $G^\vee$-equivalence classes of unramified cuspidal enhanced Langlands parameters $(\varphi,\rho)$ which are $\bfG^{\omega}$-relevant.

\begin{theorem}For every $\omega\in\Omega_{\ad}$, there is a bijection
\[\Phi(G^\vee)_{\mathrm{cusp,nr}}^\omega\longleftrightarrow\mathrm{Irr}(\bfG^{\omega}(\sfk))_{\mathrm{cusp,unip}}.
\]
This bijection satisfies several natural desiderata (including formal degrees, equivariance with respect to tensoring by weakly unramified characters), see \cite[Theorem 2]{FengOpdamSolleveld2021}.
\end{theorem}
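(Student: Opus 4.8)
The plan is to build the bijection out of three pieces of matching data and then to verify the desiderata. The first step is to organise the representation-theoretic side using Morris's theorem recalled above: every $X \in \mathrm{Irr}(\bfG^\omega(\sfk))_{\mathrm{cusp,unip}}$ is of the form $\mathrm{ind}_{\bfP_c^\dagger(\mf o)}^{\bfG^\omega(\sfk)}(\sigma^\dagger)$, where $c$ is a minimal face of $\mathcal B(\bfG^\omega,\sfk)$ (so $\bfP_c$ is a maximal parahoric), $\sigma$ is a cuspidal unipotent Deligne--Lusztig representation of $\bfL_c(\mathbb F_q)$, and $\sigma^\dagger$ extends the inflation of $\sigma$ to $\bfP_c^\dagger(\mf o)$. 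Thus the isomorphism class of $X$ is pinned down by: (i) the $\bfG^\omega(\sfk)$-conjugacy class of $c$, which by section \ref{sec:btbuilding} is enumerated by the maximal elements of $\bfP^\omega(\tilde\Delta)$ modulo the residual action of $\Omega$; (ii) a cuspidal unipotent representation $\sigma$ of $\bfL_c(\mathbb F_q)$; and (iii) the choice of $\sigma^\dagger$, which by Clifford theory forms a torsor under the character group of the finite abelian group $\bfP_c^\dagger(\mf o)/\bfP_c(\mf o)$. For (ii) one invokes Lusztig's classification: $\bfL_c(\mathbb F_q)$ admits a cuspidal unipotent representation only for certain isogeny types, and in that case the cuspidal unipotent representations correspond canonically to the cuspidal unipotent pairs (a unipotent class together with a cuspidal local system, in the sense of the generalised Springer correspondence) in the dual group of $\bfL_c$.

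The second step is to unwind the dual side. Because $\varphi$ is unramified it factors through $\langle\mathrm{Frob}\rangle \times SL(2,\CC)$, so $\varphi$ is determined by the semisimple element $s_\varphi$ together with the algebraic homomorphism $SL(2,\CC) \to G^\vee$, and $u_\varphi$ is its regular unipotent value. Discreteness forces $\mathcal G_\varphi^\circ$ -- the identity component of the centraliser $Z^1_{G^\vee_{\mathsf{sc}}}(s_\varphi)$, using that $\varphi(W_{\mathsf k})$ is topologically generated by $s_\varphi$ -- to be semisimple and $u_\varphi$ to be distinguished in it; cuspidality of $(u_\varphi,\rho)$ says that every constituent of the restriction of $\rho$ to $A_{\mathcal G_\varphi^\circ}(u_\varphi)$ is a cuspidal local system on the $\mathcal G_\varphi^\circ$-orbit of $u_\varphi$ in Lusztig's sense. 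The essential structural fact, made precise in \cite{FengOpdamSolleveld2021,FengOpdam2020}, is that the triples $(\mathcal G_\varphi^\circ, s_\varphi, u_\varphi)$ arising here are classified by the very same affine-Dynkin-diagram combinatorics of $\bfG$ that classifies the conjugacy classes of maximal parahorics of $\bfG^\omega(\sfk)$; this is what makes matching (i) possible. Matching (ii) is then Lusztig's finite-field bijection applied fibrewise, with $\mathcal G_\varphi^\circ$ playing the role of the dual group of $\bfL_c$, and matching (iii) is the identification of $\bfP_c^\dagger(\mf o)/\bfP_c(\mf o)$ with the subquotient of $Z(G^\vee_{\mathsf{sc}})$ through which $A^1_\varphi$ acts, with the $\zeta_\omega$-relevance condition cutting out, simultaneously on both sides, exactly the data attached to the inner form $\omega$.

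The third step is to check that the composite is well defined and bijective and that it has the advertised properties. Equivariance under twisting by weakly unramified characters is verified by hand: on the dual side it corresponds to multiplying $s_\varphi$ by a central element (leaving $u_\varphi$ and $\rho$ essentially untouched), which permutes the conjugacy classes of maximal parahorics according to the same $\Omega$-action used throughout section \ref{sec:btbuilding}, so the two descriptions are compatible. The formal-degree identity is the deepest ingredient and is not elementary: it rests on Opdam's theorem \cite{Opdam16} (building on Reeder \cite{Reeder-formal}), whose proof is a separate intricate computation matching the formal degree of the compactly induced representation against the adjoint $\gamma$-factor of $\varphi$. Within the construction of the bijection itself, however, I expect the main obstacle to be the bookkeeping in matching (iii): reconciling Morris's group $\bfP_c^\dagger(\mf o)/\bfP_c(\mf o)$ with the finite group $A^1_\varphi$ and its $Z(G^\vee_{\mathsf{sc}})$-action requires carefully tracking how the isogenies relating $\bfG$ to its adjoint and simply connected forms, and the dual passage between $G^\vee$ and $G^\vee_{\mathsf{sc}}$, interact with both the Bruhat--Tits building and the Deligne--Lusztig parametrisation. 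For this one follows the reductions of \cite{FengOpdamSolleveld2021,FengOpdam2020} and \cite{Sol-LLC}, which bootstrap from the simple adjoint case of \cite{Morris1996,Lusztig-IMRN}.
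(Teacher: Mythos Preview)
The paper does not prove this theorem at all: it is stated as a known result imported from the literature, with the sentence immediately preceding it attributing the simple adjoint case to \cite{Morris1996} and \cite{Lusztig-IMRN} and the general $K$-split case to \cite{FengOpdamSolleveld2021,FengOpdam2020}, and the desiderata to \cite[Theorem 2]{FengOpdamSolleveld2021}. There is nothing to compare your argument against; the theorem functions in the paper purely as a black box that supplies the Langlands parameter $(\tau,n,\rho)$ and in particular the orbit $\OO^\vee_X$ used in the main result.

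Your sketch is a reasonable high-level account of how the cited references construct the bijection, and nothing in it is obviously wrong as a narrative. But if the task is to reproduce the paper's own proof of this statement, the correct answer is simply that the paper gives none and refers the reader to \cite{FengOpdamSolleveld2021}.
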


For $X$ a unipotent supercuspidal representation of $\bfG^\omega(\sfk)$ let $\varphi$ denote the corresponding Langlands parameter.
We will write $\OO^\vee_X\in\mathcal N_o^\vee$ for the $G^\vee$-orbit of
$$n_\varphi = d\varphi(0,\begin{pmatrix} 0&1\\0&0\end{pmatrix}).$$
\begin{lemma}
    Let $X,X'\in \mathrm{Irr}(\bfG^\omega(\sfk))_{\mathrm{cusp,unip}}$.
    If $\supp(X) = \supp(X')$ then $\OO^\vee_X = \OO^\vee_{X'}$.
\end{lemma}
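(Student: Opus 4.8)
The plan is to show that the orbit $\OO^\vee_X$ depends only on the cuspidal support $\supp(X) = [(c,\sigma)]$, where $c$ is a minimal face of $\mathcal B(\bfG^\omega,\sfk)$ and $\sigma$ is a cuspidal unipotent representation of $\bfL_c(\mathbb F_q)$. First I would unwind the Langlands correspondence of Section \ref{subsec:LLC}: the parameter $\varphi$ attached to $X$ is unramified and cuspidal, so it is determined by $s_\varphi = \varphi(\mathrm{Frob},1)$ and $u_\varphi = \varphi(u)$, with $u_\varphi \in \mathcal G_\varphi^\circ$ and $\OO^\vee_X$ the $G^\vee$-orbit of $n_\varphi = d\varphi\begin{pmatrix} 0&1\\0&0\end{pmatrix}$, which lies in the same $G^\vee$-orbit as (the nilpotent attached to) $u_\varphi$ by the Jacobson--Morozov correspondence. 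So it suffices to show that the $G^\vee$-orbit of $u_\varphi$ is determined by $\supp(X)$.

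The key input is the explicit description, due to Morris and Lusztig (and in general Feng--Opdam--Solleveld), of how the correspondence is built from the parahoric data. An irreducible unipotent supercuspidal $X$ arises as $\mathrm{ind}_{\bfP_c^\dagger(\mf o)}^{\bfG^\omega(\sfk)}(\sigma^\dagger)$ with $c$ a minimal (hence maximal-parahoric) face and $\sigma$ a Deligne--Lusztig cuspidal unipotent representation of $\bfL_c(\mathbb F_q)$. On the dual side, the pseudo-Levi $\mathcal G_\varphi = Z^1_{G^\vee_{\mathsf{sc}}}(\varphi(W_{\mathsf k}))$ is (up to isogeny) the complex group whose root datum matches $\bfL_c$ — this is exactly the pseudo-Levi attached to the face $c$ via the map $\mf L_{x_0}$ of \cite[Corollary 2.19]{okada2021wavefront} — and the cuspidal unipotent local system $\rho^\circ$ on the $\mathcal G_\varphi^\circ$-class of $u_\varphi$ is the one Lusztig attaches to $\sigma$ under the classification of cuspidal unipotent representations of $\bfL_c(\mathbb F_q)$. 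Since, for a given reductive group over $\mathbb F_q$, cuspidal unipotent representations are in bijection with cuspidal unipotent local systems on nilpotent orbits, and such a local system determines its supporting orbit, the pair $(c,\sigma)$ determines both $\mathcal G_\varphi^\circ$ (as a subgroup of $G^\vee$ up to conjugacy) and the $\mathcal G_\varphi^\circ$-orbit of $u_\varphi$, hence the $G^\vee$-orbit $\OO^\vee_X$. Therefore $\supp(X) = \supp(X')$ forces $\OO^\vee_X = \OO^\vee_{X'}$.

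Concretely I would organize the argument in three steps: (i) reduce, via Jacobson--Morozov, from $n_\varphi$ to the unipotent $u_\varphi$ and its $G^\vee$-conjugacy class; (ii) identify $\mathcal G_\varphi$, up to $G^\vee$-conjugacy and isogeny, with the complex reductive group attached to $\bfL_c$ — here the minimality of $c$ (so that $\bfP_c$ is a maximal parahoric) is what makes $\mathcal G_\varphi$ the correct pseudo-Levi rather than a proper Levi thereof, and discreteness of $\varphi$ matches the fact that $c$ is a vertex; (iii) invoke the compatibility of the Langlands correspondence with Lusztig's parametrization of cuspidal unipotent representations to identify the $\mathcal G_\varphi^\circ$-class of $u_\varphi$ with the nilpotent orbit supporting the cuspidal local system associated to $\sigma$, and note that inclusion $\mathcal G_\varphi^\circ \hookrightarrow G^\vee$ then pins down $\OO^\vee_X$. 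The main obstacle is step (ii)–(iii): making precise, and properly cited, the claim that the correspondence of \cite{Morris1996, FengOpdamSolleveld2021} sends the parahoric datum $(c,\sigma)$ to a parameter whose $(\mathcal G_\varphi, u_\varphi, \rho^\circ)$ is exactly Lusztig's cuspidal triple for $\sigma$ inside the pseudo-Levi $\mf L_{x_0}(c)$ of $G^\vee$. Once that dictionary is in place the conclusion is immediate, since Lusztig's classification of cuspidal unipotent representations is a bijection and a cuspidal local system determines its orbit. It may be cleanest to phrase the proof as: $\supp(X)$ determines (the conjugacy class of) the pair $(\mathcal G_\varphi, \mathcal G_\varphi^\circ u_\varphi)$, and this pair determines $\OO^\vee_X = G^\vee \cdot u_\varphi$, citing \cite{Lusztig-IMRN, Morris1996, FengOpdamSolleveld2021} for the first determination and elementary orbit theory for the second.
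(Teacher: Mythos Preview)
Your proposal is correct in spirit but takes a genuinely different route from the paper. The paper's proof is a single sentence: ``This follows by inspecting the explicit classification in \cite{FengOpdamSolleveld2021}.'' In other words, the paper simply reads off from the explicit case-by-case parameters (reproduced in Section~\ref{sec:Langlandscuspidal}) that whenever two unipotent supercuspidals share the same $(c,\sigma)$, the listed $\OO^\vee$ is the same.

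Your approach is more structural: you argue that the Langlands correspondence for unipotent supercuspidals is \emph{built} so that the parahoric datum $(c,\sigma)$ determines the dual cuspidal datum $(\mathcal G_\varphi^\circ,\,\mathcal G_\varphi^\circ\!\cdot u_\varphi,\,\rho^\circ)$, and hence the saturation $\OO^\vee_X = G^\vee\!\cdot u_\varphi$. This is morally correct and explains \emph{why} the tables have the claimed uniformity; it would also generalize cleanly beyond the inner-to-split case. The cost is exactly the obstacle you flag: making the dictionary in step~(ii)--(iii) precise requires unpacking the construction in \cite{Lusztig-IMRN,Morris1996,FengOpdamSolleveld2021}, which in practice amounts to invoking the same explicit classification the paper cites. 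One small correction: the map $\mf L_{x_0}$ you reference lands in pseudo-Levis of $G$, not of $G^\vee$; the identification of $\mathcal G_\varphi$ with the dual pseudo-Levi attached to the parahoric type goes through a different (Kac-coordinate / affine-diagram) duality, so your citation for step~(ii) is misplaced, though the underlying claim is fine.
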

\begin{proof}
    This follows by inspecting the explicit classification in \cite{FengOpdamSolleveld2021}.
\end{proof} 

For $[(c,\sigma)]\in S^\omega$ with $c$ a minimal face we write $\OO^\vee(c,\sigma)$ for the common nilpotent parameter of all $X\in \mathrm{Irr}(\bfG^\omega(\sfk))_{\mathrm{cusp,unip}}$ with $\supp(X) = [(c,\sigma)]$.

We will recall the explicit classification in the section \ref{sec:Langlandscuspidal}.

\section{Main result}

\begin{prop}\label{prop:main}
Suppose $\mathbf{G}$ is simple and adjoint and let $[(c,\sigma)]\in S^\omega$ be such that $c$ is a minimal face.
Then 
$$\overline{\mathbb{L}}(c,\WF(\sigma)) = d_A(\OO^{\vee}(c,\sigma),1).$$
\end{prop}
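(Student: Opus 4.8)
The plan is to verify the identity by going through the explicit classification of cuspidal pairs $[(c,\sigma)]$ recalled in Section~\ref{sec:Langlandscuspidal}. Since $\mathbf G$ is simple and adjoint, a minimal face $c$ is a vertex of $\mathcal B(\mathbf G^\omega,\mathsf k)$ and $\mathbf L_c$ is a (possibly twisted) reductive group of maximal semisimple rank, obtained by deleting one node from the relevant affine Dynkin diagram; such a vertex carries a cuspidal unipotent representation only in the short list of cases permitted by Lusztig's classification of cuspidal local systems (certain symplectic and special orthogonal groups, a few twisted classical groups, and the exceptional types $G_2,F_4,E_7,E_8$). For each such pair I would compute the Kawanaka wavefront set $\WF(\sigma)$, the nilpotent parameter $\OO^\vee(c,\sigma)$, and both duality maps, and compare.

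First I would identify the two orbits. Under the generalized Springer correspondence, $\sigma$ corresponds to a cuspidal local system supported on a distinguished --- in fact special --- nilpotent orbit $\OO^{\mathrm{cusp}}_{\mathbf L_c}\in\mathcal N_o^{\mathbf L_c}(\overline{\mathbb F}_q)$, and Lusztig's computation of Kawanaka wavefront sets for unipotent representations of finite reductive groups (\cite[Theorem 11.2]{lusztigunip}, see also Geck--Malle and Taylor) gives $\WF(\sigma)=\OO^{\mathrm{cusp}}_{\mathbf L_c}$. On the dual side, by the explicit correspondence of \cite{FengOpdamSolleveld2021} the parameter $\varphi$ of any $X$ with $\supp(X)=[(c,\sigma)]$ is unramified and cuspidal, so $\mathcal G_\varphi^\circ=Z^1_{G^\vee_{\mathsf{sc}}}(s_\varphi)^\circ$ is a pseudo-Levi of maximal rank in which $(u_\varphi,\rho)$ is a cuspidal pair, and hence $\OO^\vee(c,\sigma)$ is the $G^\vee$-saturation of the distinguished orbit $\OO^{\mathrm{cusp}}_{\mathcal G_\varphi^\circ}$ carrying the cuspidal local system of $\mathcal G_\varphi^\circ$.

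The comparison then rests on three facts about the classification. (i) The complex group $L_J$ attached to $\mathbf L_c$ and the pseudo-Levi $\mathcal G_\varphi^\circ\subseteq G^\vee_{\mathsf{sc}}$ are Langlands dual to one another, because both are read off from corresponding nodes of the affine, respectively extended, Dynkin diagrams, which are interchanged by passing to the dual root datum. (ii) Under this duality the cuspidal orbits correspond, $d_{\mathcal G_\varphi^\circ}(\OO^{\mathrm{cusp}}_{\mathcal G_\varphi^\circ})=\OO^{\mathrm{cusp}}_{L_J}$, and moreover $d(\OO^\vee(c,\sigma))=\mathrm{Sat}_G(\OO^{\mathrm{cusp}}_{L_J})$; for the classical groups this is the combinatorial statement that the ``staircase'' partitions indexing cuspidal orbits of symplectic and orthogonal groups go to one another under Spaltenstein's duality, and it is a finite check for the exceptional types. (iii) The semisimple element $s_\varphi$, whose class $C_\varphi$ in $A_{G^\vee}(n_\varphi)$ enters Sommers' description $d_S(\OO^\vee(c,\sigma),C_\varphi)=\mathrm{Sat}_G\big(d_{\mathcal G_\varphi^\circ}(\OO^{\mathrm{cusp}}_{\mathcal G_\varphi^\circ})\big)$, is matched under the duality with the element $t$ for which $\mf L_{x_0}(c)=(L_J,tZ_{L_J}^\circ)$, so that $C_\varphi$ corresponds to the class of $t$ in $A_G(x)$; this uses the equivariance desiderata of the correspondence in \cite{FengOpdamSolleveld2021}. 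Granting these, (ii) and (iii) give $d_S(\OO^\vee(c,\sigma),C_\varphi)=\mathrm{Sat}_G(\OO^{\mathrm{cusp}}_{L_J})=d(\OO^\vee(c,\sigma))=d_S(\OO^\vee(c,\sigma),1)$, hence $(\OO^\vee(c,\sigma),C_\varphi)\sim_A(\OO^\vee(c,\sigma),1)$, so $d_A(\OO^\vee(c,\sigma),1)=d_A(\mf Q(\OO^\vee(c,\sigma),C_\varphi))$; tracking the component-group datum through $d_A$ and comparing with the definition of $\mathbb L$ (which records precisely the class of $t$) then yields the equality $\overline{\mathbb L}(c,\WF(\sigma))=d_A(\OO^\vee(c,\sigma),1)$ in $\mathcal N_{o,\bar c}$.

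The step I expect to be the main obstacle is the component-group bookkeeping in (iii) and in the last paragraph --- equivalently, checking that the canonical conjugacy class attached by Achar's algorithm to $(\OO^\vee(c,\sigma),1)$ is the image of the class of $t$. The underlying nilpotent orbit is forced by $\pr_1 d_A(\OO^\vee,1)=d(\OO^\vee)$ together with fact (ii), so everything reduces to this enhancement. In the classical cases it becomes an explicit manipulation of Lusztig symbols and of the partitions indexing cuspidal orbits; in the finitely many exceptional cases it is a direct verification using Achar's algorithm in \cite[Section 3.4]{Acharduality} against the nilpotent parameters tabulated in \cite{FengOpdamSolleveld2021}.
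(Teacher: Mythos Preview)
Your structural framework rests on fact~(i), that the reductive quotient $L_J$ and the pseudo-Levi $\mathcal G_\varphi^\circ\subseteq G^\vee_{\mathsf{sc}}$ are Langlands dual, and this is false already in the first nontrivial classical case. Take $\mathbf G=SO(2n+1)$ with $\omega=1$: then $L_J$ has type $D_{a^2}\times B_{b(b+1)}$, whose Langlands dual is $D_{a^2}\times C_{b(b+1)}$. But $G^\vee_{\mathsf{sc}}=Sp(2n)$, and every maximal-rank pseudo-Levi of $Sp(2n)$ carrying a cuspidal local system is of type $C_p\times C_q$ with $p,q$ triangular. The ranks do not even agree in general, so there is no way to salvage~(i) as stated. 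A similar mismatch occurs for $\mathbf G=PSp(2n)$ (where $L_J$ is $C\times C$ but pseudo-Levis of $Spin(2n+1)$ are $D\times B$) and for the nonsplit inner forms, where $L_J$ has a type-$A$ factor. Relatedly, your identification $\WF(\sigma)=\OO^{\mathrm{cusp}}_{\mathbf L_c}$ via the generalized Springer correspondence conflates two different objects: for $C_{r(r+1)}(\mathbb F_q)$ the Kawanaka wavefront set of the cuspidal unipotent representation is $(2,2,4,4,\dots,2r,2r)$, whereas the orbit supporting a cuspidal local system on $Sp(2m,\mathbb C)$ is the staircase $(2,4,\dots,2d)$, and these live in groups of different rank.

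Because (i) fails, the chain of implications through (ii) and (iii) does not get off the ground, and the component-group bookkeeping you flag as the main obstacle is not in fact where the argument breaks. The paper does not attempt any such structural duality: it proceeds by direct computation in Achar's partition calculus, writing down $\WF(\sigma)$ and the partition $\lambda$ for $\OO^\vee(c,\sigma)$ explicitly, checking that Achar's invariant $\pi(\lambda)$ is empty (so $d_A(\lambda,1)={}^{\langle\emptyset\rangle}d(\lambda)$), and verifying that the first factor of the parahoric wavefront set has all parts of even multiplicity (so $\overline{\mathbb L}(J,\WF(\sigma))={}^{\langle\emptyset\rangle}d(\lambda)$ as well). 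For the exceptional groups it exploits the lucky circumstance that $J\subseteq\Delta$ in every case, so $\overline{\mathbb L}$ is computed by \cite[Proposition~2.30]{okada2021wavefront} and the identity reduces to $\WF(\sigma)=d(\OO^\vee)$, a table lookup. Your final paragraph already concedes that this explicit route is what one would fall back on; that is in fact the entire proof, and the conceptual layer you build on top of it is not correct.
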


This proposition will be proved in Section \ref{sec:proof}.

\begin{theorem}
    \label{thm:main}
    Let $\bfG$ be a split semisimple group defined over $\sfk$.
    Let $\omega\in \Omega$ and let $\bfG^\omega$ denote the corresponding inner twist of $\bfG$.
    Let $X$ be an irreducible supercuspidal $\mathbf{G}^\omega(\sfk)$-representation with unipotent cuspidal support. Then
     $^K\WF(X)$, $^{\bark}\WF(X)$ are singletons, and
    \begin{align*}
        ^K\WF(X,\CC) &= d_A(\OO^{\vee}_X,1)\\
        ^{\bark}\WF(X,\CC) &= d(\OO^{\vee}_X).
    \end{align*}
\end{theorem}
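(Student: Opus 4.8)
\textbf{Proof plan for Theorem \ref{thm:main}.}

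The plan is to reduce the general semisimple case to the simple adjoint case handled by Proposition \ref{prop:main}, using the structural results assembled in the preliminaries. First I would dispose of the ``singleton'' claim: by Proposition \ref{prop:wfsupp} we have $^K\WF(X) = \WFsupp(X)$ for any depth-$0$ supercuspidal $X$, and $\WFsupp(X)$ is a singleton by construction; then the compatibility statement \eqref{eq:wfcompatibility} (from \cite[Theorem 2.37]{okada2021wavefront}) immediately gives that $^{\bark}\WF(X)$ is a singleton as well and that $^{K}\WF(X,\CC)$ projects to $^{\bark}\WF(X,\CC)$ under $\pr_1$. So it suffices to prove the first displayed formula, $^K\WF(X,\CC) = d_A(\OO^\vee_X,1)$; the second then follows by applying $\pr_1$ and using the compatibility $d_S(\OO,1) = d(\OO)$ together with $\pr_1 \circ d_A \circ \mf Q = d_S$ on the relevant pairs, once we know $\OO^\vee_X$ is unchanged under the reduction (see below).

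Next I would unwind $^K\WF(X,\CC)$ in the simple adjoint case. Writing $\supp(X) = [(c,\sigma)]$ with $c$ a minimal face (by Section \ref{s:unip-cusp}, supercuspidality of a unipotent representation is equivalent to minimality of the supporting face), Proposition \ref{prop:wfsupp} gives $^K\WF(X) = \WFsupp(X) = [\mathscr L(c,\WF(\sigma))]$, and via the commuting square relating $\mathscr L$, $\theta_{x_0,\bfT_1}$, $\iota_{x_0}$ and $\mathbb L$ (from \cite[Theorem 2.1.7]{unipotent1}), together with $\overline{\mathbb L} = \mf Q \circ \mathbb L$, we get $^K\WF(X,\CC) = \overline{\mathbb L}(c,\WF(\sigma))$ after transporting through $\bar\theta_{\bfT_1}$. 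Here $\WF(\sigma)$ is the Kawanaka wavefront set of the Deligne--Lusztig cuspidal unipotent representation $\sigma$ of $\bfL_c(\mathbb F_q)$, and the Lemma in Section \ref{s:unip-cusp} guarantees $\OO^\vee_X = \OO^\vee(c,\sigma)$ depends only on $\supp(X)$. Thus in the simple adjoint case Theorem \ref{thm:main} is exactly Proposition \ref{prop:main}.

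For the reduction from general semisimple $\bfG$ to simple adjoint, I would proceed in two stages. Stage one: pass from $\bfG^\omega$ to its adjoint group $\bfG^\omega_{\ad}$ via the isogeny $f:\bfG^\omega \to \bfG^\omega_{\ad}$ (or rather work with the relevant isogeny so that $X$ arises on the quotient side). Lemma \ref{lem:isogeny} says the canonical unramified wavefront set is preserved under pullback along an isogeny and under decomposition into irreducibles, and since $\mathcal N^{\bfH}_{o,\bar c} \simeq \mathcal N^{\bfH'}_{o,\bar c}$ for an isogeny, $^K\WF(X,\CC)$ is computed on the adjoint side; one must also check the nilpotent parameter $\OO^\vee_X$ is matched under the dual isogeny $\bfG^\vee_{\mathsf{sc}} \to \bfG^\vee$, which is routine since $n_\varphi$ lies in the image of the derived/simply-connected level and nilpotent orbits correspond under central isogeny. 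Stage two: if $\bfG_{\ad} = \prod_i \bfG_{\ad,i}$ with each $\bfG_{\ad,i}$ simple adjoint, then $\bfG^\omega_{\ad}(\sfk)$ is (after matching $\omega$ componentwise through $\Omega_{\ad} = \prod_i \Omega_{\ad,i}$) a restriction of scalars product, a unipotent supercuspidal $X$ factors as $\bigotimes_i X_i$, and all the invariants --- $^K\WF$, $\OO^\vee$, $d_A$ --- decompose as products over the factors; each factor $X_i$ falls under Proposition \ref{prop:main}, possibly after one further base-change (Weil restriction) bookkeeping step to reduce an absolutely-simple factor over an extension to the split situation. Reassembling gives the formula for $\bfG^\omega_{\ad}$, hence for $\bfG^\omega$.

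\textbf{Main obstacle.} The genuinely substantive input is Proposition \ref{prop:main} itself, which is proved separately; granting it, the work in this theorem is the isogeny/product bookkeeping. Within that, the step I expect to be the most delicate is verifying full compatibility of \emph{all} the identifications --- $\theta_{x_0,\bfT_1}$, $\bar\theta_{\bfT_1}$, the isomorphism $\mathcal N^{\bfH}_{o,\bar c}\simeq\mathcal N^{\bfH'}_{o,\bar c}$, and the Langlands-parameter side $\OO^\vee_X$ --- simultaneously under the central isogeny, since the enhancements (conjugacy classes in $\bar A(\OO)$, relevance characters $\zeta_\omega$) are exactly what changes between $\bfG$ and $\bfG_{\ad}$, and one must be sure Lemma \ref{lem:isogeny} plus the relevance condition in Section \ref{subsec:LLC} interact correctly so that the second-coordinate data on both sides of $^K\WF(X,\CC) = d_A(\OO^\vee_X,1)$ match and not merely their $\pr_1$-images.
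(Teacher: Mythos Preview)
Your proposal is correct and follows essentially the same approach as the paper. Both arguments reduce to Proposition \ref{prop:main} via Proposition \ref{prop:wfsupp} and the identification $\WFsupp(X,\CC)=\overline{\mathbb L}(c,\WF(\sigma))$, and then extend to general semisimple $\bfG$ using Lemma \ref{lem:isogeny} together with compatibility with products.

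The only difference is the order of the reduction steps. The paper routes through the simply-connected cover, applying Lemma \ref{lem:isogeny} twice: simple adjoint $\Rightarrow$ simply-connected simple, then products to get simply-connected semisimple, then Lemma \ref{lem:isogeny} again for general semisimple. You instead propose a single isogeny step from $\bfG^\omega$ directly to its adjoint quotient, followed by splitting into simple factors. The paper's ordering has the mild advantage that in its final step the lemma is applied in its stated pullback direction ($X$ sits on the target of $(\bfG^\omega)_{sc}\to\bfG^\omega$), whereas your one-step passage to the adjoint group requires the additional fact---which you flag in your parenthetical hedge but do not verify---that a unipotent supercuspidal of $\bfG^\omega(\sfk)$ occurs as a constituent of the pullback of one on $\bfG^\omega_{\ad}(\sfk)$. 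Modulo that point the content is identical, and your identification of Proposition \ref{prop:main} as the only substantive input is exactly right; you are also slightly more explicit than the paper in deducing the $^{\bark}\WF$ formula from the $^K\WF$ one via \eqref{eq:wfcompatibility}.
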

\begin{proof}
    Suppose first that $\mathbf{G}$ is simple and adjoint.     
    
    Let $X$ be a unipotent supercuspidal representation of $\bfG^\omega(\sfk)$.
    By Proposition \ref{prop:wfsupp} we have that
    $$^K\WF(X) = \WFsupp(X).$$
    Write $\supp(X) = [(c,\sigma)]$.
    By definition
    $$\WFsupp(X,\CC) = \overline{\mathbb L}(c,\WF(\sigma)).$$
    By Proposition \ref{prop:main} we have that
    $$\overline{\mathbb{L}}(c,\WF(\sigma)) = d_A(\OO^{\vee}(c,\sigma),1).$$
    Since $\OO^\vee_X = \OO^\vee(c,\sigma)$ we get that
    $$^K\WF(X;\CC) = d_A(\OO^{\vee}_X,1)$$
    as required.
    
    Applying Lemma \ref{lem:isogeny} we get that the theorem holds for all simply-connected simple groups.
    Since wavefront sets behave as expected with respect to products, the theorem holds for all simply-connected semisimple groups.
    Finally, applying Lemma \ref{lem:isogeny} again we get that the theorem holds for all split semisimple groups $\bfG$.
\end{proof}

\section{Unipotent cuspidal representations of finite reductive groups}
\label{sec:unipcusp}

For the explicit results about the parametrization of unipotent representations of finite reductive groups, we refer to \cite[\S4,\S8.1]{Lusztig1984} and \cite[\S13.8,\S13.9]{Carter1993}. The relevant results for the Kawanaka wavefront sets and unipotent support are in \cite[\S10,\S11]{lusztigunip}. The classification of unipotent representations is independent of the isogeny, so in this section, we may assume without loss of generality that the group $G$ is simple and adjoint. 

\subsection{Classical groups}

\paragraph{$A_{n-1}(q)$.} The group $G=PGL(n)$ does not have unipotent cuspidal representations. 

\paragraph{$^2\!A_n(q^2)$.} The group $G=PU(n+1)$ has unipotent representations if and if $n=\frac{r(r+1)}2-1$, for some integer $r\ge 2$. The unipotent $^2\!A_n(q^2)$-representations are in one-to-one correspondence with partitions of $n+1$, and so are the geometric nilpotent orbits of $G$. When $n=\frac{r(r+1)}2-1$, the cuspidal unipotent representation $\sigma$ is unique and it is parametrized by the partition
\[(1,2,3,\dots,r).
\]
The Kawanaka wavefront set if $\WF(\sigma)=(1,2,3,\dots,r).$

\paragraph{$B_n(q),C_n(q)$.} Suppose $G$ is $SO(2n+1)$ or $PSp(2n)$ over $\mathbb F_q$. The group $G(\FF_q)$ has a unipotent cuspidal representation (and in this case the cuspidal representation is unique) if and only if $n=r^2+r$ for a positive integer $r$. The unipotent representations of $G(\FF_q)$ are parametrized by symbols
\[\begin{pmatrix} \lambda_1 & &\lambda_2&\dots & &\lambda_a\\&\mu_1&&\dots&\mu_b
\end{pmatrix},
\]
$0\le\lambda_1<\lambda_2<\dots<\lambda_a$, $0\le \mu_1<\mu_2<\dots<\mu_b$, $a-b$ odd and positive, and $\lambda_1,\mu_1$ are not both zero, such that
\[n=\sum\lambda_i+\sum\mu_j-\left(\frac{a+b-1}2\right)^2.
\]
%Two symbols are equivalent if they are in the same equivalence class under the equivalence relation generated by 
%\[\begin{pmatrix} \lambda_1 & &\lambda_2&\dots & &\lambda_a\\&\mu_1&&\dots&\mu_b
%\end{pmatrix}\sim \begin{pmatrix} 0&&\lambda_1+1 & &\lambda_2+1&\dots & &\lambda_a+1\\&0&&\mu_1+1&&\dots&\mu_b+1
%\end{pmatrix}.
%\]
Let $d=a-b$ be the defect of the symbol. Two unipotent representations belong to the same family if their symbols have the same entries with the same multiplicities. %In a family, the symbol of the unique special representation has defect $1$ and satisfies
%\[0\le \lambda_1\le \mu_1\le\lambda_2\le\dots\le \mu_{b-1}\le\lambda_b.
%\]
For the unipotent cuspidal representation $\sigma$, the corresponding symbol has defect $d=2r+1$ and it  is
\[\begin{pmatrix}
0&1&2&\dots&2r\\
&&-
\end{pmatrix}.
\]
%The special symbol in its family is
%\[\begin{pmatrix}
%0&&2&&\dots&&2r\\
%&1&&3&\dots&2r-1
%\end{pmatrix}
%\]
%and the corresponding special Weyl group representation is %$(1,2,\dots,r)\times (1,2,\dots,r)$. Under the Springer correspondence, %$(1,2,\dots,r)\times (1,2,\dots,r)$ corresponds to the nilpotent class 
%\begin{equation}\label{e:B-C}
%\begin{aligned}
%(2,2,4,4,\dots,2r,2r) &\text{ if } G^\vee=Sp(2n,\CC),\\
%(1,1,3,3,\dots,2r-1,2r-1,2r+1)&\text{ if } G^\vee=Spin(2n+1,\CC).
%\end{aligned}
%\end{equation}
%Notice that the two nilpotent orbits in (\ref{e:B-C}) are dual under the %duality $d$:
%\[d((1,1,3,3,\dots,2r-1,2r-1,2r+1))=(2,2,4,4,\dots,2r,2r).
%\]
The geometric nilpotent orbits of $SO(2n+1)$ (resp., $PSp(2n)$) are parametrized by partitions of $2n+1$ (resp., $2n$), where the even (resp., odd) parts occur with even multiplicity. The Kawanaka wavefront set of the unipotent cuspidal representation $\sigma$ is
\begin{equation}
\WF(\sigma)=\begin{cases}(1,1,3,3,\dots,2r-1,2r-1,2r+1),&G=SO(2n+1)\\
(2,2,4,4,\dots,2r,2r), &G=PSp(2n).
\end{cases}
\end{equation}

\paragraph{$D_n(q)$.} Suppose $G$ is the split orthogonal group $PSO(2n)$ over $\FF_q$.  
There exists a unipotent cuspidal representation (and in this case it is unique) if and only if $n=r^2$ for a positive even integer $r$. 
The type $D_n$-symbols are
\[\begin{pmatrix} \lambda_1 & &\lambda_2&\dots & &\lambda_a\\&\mu_1&&\dots&\mu_b
\end{pmatrix},
\]
$0\le\lambda_1<\lambda_2<\dots<\lambda_a$, $0\le \mu_1<\mu_2<\dots<\mu_b$, $a-b$ is divisible by $4$, and $\lambda_1,\mu_1$ are not both zero, such that
\[n=\sum\lambda_i+\sum\mu_j-\frac {(a+b)(a+b-2)}4.
\]
One symbol and the symbol if the row swapped are regarded the same. 
The irreducible unipotent $G(\FF_q)$-representations are in one-to-one correspondence with the type $D_n$-symbols, except if the symbol has identical rows, then there are two nonisomorphic irreducible unipotent representations attached to it. 
The defect $d=a-b$ is even.

For the unipotent cuspidal representation $\sigma$, the corresponding symbol has defect $d=2r$ and it is
\[\begin{pmatrix}
0&1&2&\dots&2r-1\\
&&-
\end{pmatrix}.
\]
%The special symbol in its family is
%\[\begin{pmatrix}
%0&2&\dots&2r-2\\
%1&3&\dots&2r-1
%\end{pmatrix}
%\]
%and the corresponding special Weyl group representation is %$(1,2,\dots,r)\times (1,2,\dots,r-1)$. Under the Springer correspondence, %$(1,2,\dots,r)\times (1,2,\dots,r-1)$ corresponds to the nilpotent class 
The geometric nilpotent orbits of $PSO(2n)$ are parametrized by partitions of $2n$ with the even parts occuring with even multiplicity. The Kawanaka wavefront set of the unipotent cuspidal representation is 
\begin{equation}\label{e:D}
\begin{aligned}
\WF(\sigma)=(1,1,3,3,\dots,2r-1,2r-1).
\end{aligned}
\end{equation}
%Notice that the nilpotent orbit in (\ref{e:D}) is self-dual under the %duality $d$:
%\[d((1,1,3,3,\dots,2r-1,2r-1))=(1,1,3,3,\dots,2r-1,2r-1).
%\]

\paragraph{$^2\!D_n(q^2)$.} 
The group $^2\!D_n(q^2)$ admits unipotent cuspidal representations if and only if $n=r^2$, for some odd positive integer $r$, and in this case the unipotent cuspidal representation is unique. 
The type $^2\!D_n$-symbols are
\[\begin{pmatrix} \lambda_1 & &\lambda_2&\dots & &\lambda_a\\&\mu_1&&\dots&\mu_b
\end{pmatrix},
\]
$0\le\lambda_1<\lambda_2<\dots<\lambda_a$, $0\le \mu_1<\mu_2<\dots<\mu_b$, $a-b\equiv 2$ mod $4$, and $\lambda_1,\mu_1$ are not both zero, such that
\[n=\sum\lambda_i+\sum\mu_j-\frac {(a+b)(a+b-2)}4.
\]
One symbol and the symbol if the row swapped are regarded the same. The irreducible unipotent $^2\!D_n(q^2)$-representations are in one-to-one correspondence with the type $^2\!D_n$-symbols.

For the unipotent cuspidal representation $\sigma$, the corresponding symbol and Kawanaka wavefront set are the same as in the split case $D_n(q)$ (except $r$ is now odd).

\begin{comment}
and in this case, they belong to the $\gamma$-stable family parametrized by the same special symbol as in the split case:
\[\begin{pmatrix}
0&2&\dots&2r-2\\
1&3&\dots&2r-1
\end{pmatrix}.
\]
\end{comment}

\paragraph{$^3\!D_4(q^3)$} 
The group $^3\!D_4(q^3)$ has eight unipotent representations: six are in the principal series, in one-to-one correspondence with the irreducible representations of the Weyl group of type $G_2$, and two unipotent cuspidal representations, denoted $^3\!D_4[1]$ and $^3\!D_4[-1]$. 

The geometric nilpotent orbits of $^3\!D_4(q^3)$ are parametrized by partitions of $8$ with even parts occurring with even multiplicity. The unipotent cuspidal representations have Kawanaka wavefront set
\begin{equation}
    \WF(^3\!D_4[1])=\WF(^3\!D_4[1])=(1,1,3,3).
\end{equation}

\begin{comment}
In Lusztig's partition into $\gamma$-stable families \cite[\S4.19]{Lusztig1984}, where $\gamma$ is the automorphism of order $3$ of the diagram of type $D_4$, these eight unipotent representations are divided into five families of symbols of type $D_4$. The special symbol that indexes the $\gamma$-stable family for the two cuspidal unipotent representations is
\[\begin{pmatrix}
0&2\\
1&3
\end{pmatrix}.
\]
\end{comment}

\subsection{Exceptional groups}

Suppose $G(\FF_q)$ is $\FF_q$-split. In the table below, we will list all unipotent cuspidal $G(\FF_q)$-representations. The irreducible unipotent representations of $G(\FF_q)$ are partitioned into families, each family being in one-to-one correspondence with the set
\[M(\Gamma)=\Gamma\text{-orbits in }\{(x,\tau)\mid x\in \Gamma,\ \tau\in\widehat{Z_\Gamma(x)}\},
\]
for a finite group $\Gamma$. Each group $\Gamma$ is uniquely attached to a special nilpotent orbit $\OO^\vee$ in the dual Lie algebra, such that $\Gamma=\bar{A}(\OO^{\vee})$, where $\bar{A}(\OO^{\vee})$ is Lusztig's canonical quotient. 

In Table \ref{ta:exc-fin}, for each  unipotent cuspidal representation $\sigma$, we will record the corresponding Kawanaka wavefront set, the nilpotent orbit $\OO^{\vee}$ corresponding to $\sigma$ and its canonical quotient $\bar{A}(\OO^{\vee})$, the pair $(x,\tau)\in M(\bar{A}(\OO^{\vee}))$ that parametrizes $\sigma$. The geometric nilpotent orbits are given in the Bala-Carter notation.

%Note that all of the nilpotent orbits $\OO^{\vee}_{\sigma}$ which appear are self-dual with respect to $d$.

\begin{small}
\begin{longtable}{|c|c|c|c|c|c|}

    \hline
        $G(\FF_q)$ & Cuspidal $\sigma$ &$\WF(\sigma)$ & $\OO^\vee_{\sigma}$   &$\bar A(\OO^\vee)$ &$(x,\tau)$\\ \hline
\hline
$G_2$ &$G_2[1]$ &$G_2(a_1)$ &$G_2(a_1)$ &$S_3$ &$(1,\epsilon)$\\
           &$G_2[-1]$ & & & &$(g_2,\epsilon)$\\
            &$G_2[\theta^l]$, $l=1,2$ & & & &$(g_3,\theta^l)$\\ \hline
 $F_4$ &$F_4^{II}[1]$ &$F_4(a_3)$ &$F_4(a_3)$ &$S_4$ &$(1,\lambda^3)$\\
           &$F_4[-1]$   & & & &$(g_2,\epsilon)$\\
           &$F_4^I[1]$ & & & &$(g_2',\epsilon)$\\
           &$F_4[\theta^l]$, $l=1,2$ & & & &$(g_3,\theta^l)$\\
           &$F_4[\pm i]$           & & & &$(g_4,\pm i)$\\ 
\hline
$E_6$ &$E_6[\theta^l]$, $l=1,2$ &$D_4(a_1)$ &$D_4(a_1)$ &$S_3$ &$(g_3,\theta^l)$\\
\hline
$E_7$ &$E_7[ \zeta]$ &$A_4+A_1$ &$A_4+A_1$ &$\ZZ/2$ &$(g_2,1)$\\
&$E_7[ \zeta]$ & & & &$(g_2,\epsilon)$\\
\hline
$E_8$ &$E_8^{II}[1]$ &$E_8(a_7)$ &$E_8(a_7)$ &$S_5$ &$(1,\lambda^4)$\\
           &$E_8[-1]$ & & & &$(g_2,-\epsilon)$\\
           &$E_8^{I}[1]$ & & &  &$(g_2',\epsilon)$\\
           &$E_8[\theta^l]$, $l=1,2$ & & & &$(g_3,\epsilon\theta^l)$\\
           &$E_8[-\theta^l]$, $l=1,2$ & & & &$(g_6,-\theta^l)$\\
           &$E_8[\pm i]$ & & & &$(g_4,\pm i)$\\
           &$E_8[\zeta^j]$, $1\le j\le 4$ & & & &$(g_5,\zeta^j)$\\
           \hline

\caption{Unipotent cuspidal representations of exceptional groups $G(\FF_q)$}\label{ta:exc-fin}
\label{table:cupsidalexceptional}

\end{longtable}
\end{small}

Finally, for the twisted group $^2\!E_6(q^2)$, there are three unipotent cuspidal representations, denoted $^2\!E_6[1]$, $^2\!E_6[\theta]$, $^2\!E_6[\theta^2]$. All three of them have Kawanaka wavefront set $D_4(a_1)$ in $E_6$.
%They are in the $\gamma$-stable family parametrized by the special representation $\phi_{80,7}$.

\section{Langlands parameters for unipotent supercuspidal representations}\label{sec:Langlandscuspidal}

Recall that $X\in \Pi^{\mathsf{Lus}}(\bfG^\omega(\sfk))$ is supercuspidal if and only if $\supp(X) = [(c,\sigma)]$ where $c$ is a minimal face of $\mathcal B(\bfG^\omega,\sfk)$.
Since every association class of faces of $\mathcal B(\bfG^\omega,\sfk)$ contains a face of $c_0^\omega$ we may assume that $c$ is of the form $c^\omega(J)$ for some $J\in \bfP^\omega(\tilde\Delta)$.
Moreover $c^\omega(J)$ is a minimal face if and only if $J$ is maximal in $\bfP^\omega(\tilde\Delta)$.
In this section we list the set of possible pairs $(J,\sigma)$ (up to $\sim$) along with $\OO^\vee(c^\omega(J),\sigma)$, where $J\in \bfP^\omega(\tilde\Delta)$ is maximal and $\sigma$ is a unipotent cupsidal representation of $\bfL_{c^\omega(J)}(\mathbb F_q)$, for $\bfG$ split, simple and adjoint, and $\omega\in \Omega$.
We use the conventions of \cite[Section 6.10]{Lusztig-IMRN} to specify the set $J\subseteq \tilde\Delta$.
Note that when $\bfG$ is of classical type, the group $\bfL_{c^\omega(J)}(\mathbb F_q)$ is also of classical type and so if it admits a unipotent cuspidal representation, then it has \emph{exactly one} unipotent cuspidal representation.
Thus for the classical types we will only record the $J$ and $\OO^\vee(c^\omega(J),\sigma)$.
%Note that since $c$ is minimal, the association class of $c$ is just the orbit of $c$ under the action of $\bfG^\omega(\sfk)$.
%Since each finite reductive group of classical type has at most one cuspidal unipotent representation, so for the classical types it is sufficient to specify the type $J$ of the parahoric subgroup below. 
The explicit parameters can be found in \cite{Lusztig-IMRN}, \cite{Reeder-formal}, \cite[\S4.7]{FengOpdam2020}, and  \cite{FengOpdamSolleveld2021}.

\subsection{Classical groups}
\paragraph{$PGL(n)$} If $\bfG=PGL(n)$, then $G^\vee=SL(n,\CC)$ and $Z(G^\vee)=\ZZ/n\ZZ$. Hence $\Omega=\text{Irr}(Z(G^\vee)$ can be identified with $C_n$. For $\omega\in \Omega$, the inner form $\bfG^{\omega}$ admits unipotent supercuspidal representations if and only if $\omega$ has order $n$ and $J=\emptyset$. In this case $\OO^\vee(c^\omega(J),\sigma)$ is the principal nilpotent orbit.
%In this case, $G^{F_\omega}\cong D^\times/\mathsf k^\times$ where $D$ is a division algebra of dimension $n^2$ over $\mathsf k$, $Z(D)=\mathsf k$. 
%The unipotent supercuspidal representations of $G^{F_\omega}$ are precisely the weakly ramified characters of $G^{F_\omega}$, the one-dimensional $G^{F_\omega}$-representations trivial on the unique maximal parahoric subgroup of $G^{F_\omega}$. They are naturally parametrized in the local Langlands correspondence by $s\in Z(G^\vee)$, $n$ a principal nilpotent element in $\mathfrak{sl}(n,\CC)$ and $\rho=\omega\in \widehat{Z(G^\vee)}$. Clearly $d(\OO^\vee)=0.$

\paragraph{$SO(2n+1)$} 
\label{sec:typeB}
If $\bfG=SO(2n+1)$, $G^\vee=Sp(2n,\CC)$ and $Z(G^\vee)=\ZZ/2\ZZ$. The inner forms are parametrized by $\widehat{Z(G^\vee)}\cong C_2=\{1,-1\}$. 

\begin{enumerate}
    \item If $\omega = 1$, then $J$ is of the form $D_\ell\times B_t$, where $\ell+t=n$, $\ell=a^2$, $t=b(b+1)$, $a,b$ nonnegative integers, $a$ even.
%For the split form $G^{F_1}$, the maximal parahoric subgroup is of type $J=D_\ell\times B_t$, $\ell+t=n$, where $\ell=a^2$, $t=b(b+1)$, $a,b$ nonnegative integers, $a$ even. 
%The dual group is $G^\vee=Sp(2n,\CC)$ and $Z_{G^\vee}(s)=Sp(2p,\CC)\times Sp(2q,\CC)$. 
%The nilpotent element $n\in Z_{G^\vee}(s)$ is
%\[(2,4,\dots, 2\Delta)\times (2,4,\dots, 2\Sigma).
%\]
Let
\begin{equation}
    \delta = \begin{cases}
        b-a & \mbox{ if } b\ge a \\
        a-b-1 & \mbox{ if } a>b,
    \end{cases}
\end{equation}
and $\Sigma = a+b$.
The nilpotent orbit $\OO^\vee(c^\omega(J),\sigma)$ is parameterized by the partition
\begin{equation}
    \lambda= (2,4,\dots,2\delta) \cup (2,4,\dots,2\Sigma).    
\end{equation}
%Notice that
%\begin{equation}
    %d(\OO^\vee)=(1,1,1,1,3,3,3,3,\dots,2m-1,2m-1,2m-1,2m-1,2m+1,2m+1,\dots,2M-1,2M-1,2M+1),
%\end{equation}
%where $m=\min(a,b)$ and $M=\max(a,b)$.

    \item If $\omega = -1$, then $J$ is of the form $D_\ell\times B_t$, where $\ell+t=n$, $\ell=a^2$, $t=b(b+1)$, $a,b$ nonnegative integers, where $a$ is now odd. The nilpotent orbit $\OO^\vee(c^\omega(J),\sigma)$ is defined analogously to the $\omega=1$ case.
\end{enumerate}

\paragraph{$PSp(2n)$}
\label{sec:typeC}
If $\bfG=PSp(2n)$, then $G^\vee=Spin(2n+1,\CC)$, and $Z(G^\vee)=\ZZ/2\ZZ$. The inner forms are parametrized by $\widehat{Z(G^\vee)}\cong C_2=\{1,-1\}$. 

\begin{enumerate}
    \item If $\omega = 1$, then $J$ is of the form $C_\ell\times C_t$, where $\ell + t = n$, $\ell=a(a+1)$, $t=b(b+1)$, $a,b$ nonnegative integers and $a\ge b$. 
%The dual group if $G^\vee=Spin (2n+1,\CC)$ and $Z_{G^\vee}(s)=(Spin(2p,\CC)\times Spin(2q+1,\CC))/\langle (-1,-1)\rangle$, $p,q$ nonnegative integers, and $2p,2q+1$ are squares. The  nilpotent element $n\in Z_{G^\vee}(s)$ is
Let $\delta = a-b$ and $\Sigma = a+b$. 
The nilpotent orbit $\OO^\vee(c^\omega(J),\sigma)$ is parameterized by the partition
\begin{equation}
    \lambda = (1,3,\dots,2\delta-1) \cup (1,3,\dots,2\Sigma+1) 
\end{equation}
where $\cup$ means union of partitions.
%\[(1,3,\dots,2\delta-1)\times (1,3,\dots, 2\Sigma+1)
%\]

    \item If $\omega = -1$, then $J$ is of the form $J=C_\ell~^2\!A_{t} ~C_\ell$, where $2\ell+t = n-1$ and $t=\frac {a(a+1)}2-1$, $\ell=b(b+1)$, $a,b$ are nonnegative integers. If $a=0,1$, we interpret $J$ as being $J=C_\ell\times C_\ell$.
Let $a'$ be such that $a=2a'$ if $a$ is even and $a=2a'+1$ if $a$ is odd.
Let $\Sigma = b+a'$ and
$$\delta = \begin{cases}
    b-a' & \mbox{ if } 2b\ge a \\
    a'-b & \mbox{ if } 2b< a.
\end{cases}$$
The nilpotent orbit $\OO^\vee(c^\omega(J),\sigma)$ is parameterized by the partition
$$\lambda = \begin{cases}
    (1,5,\dots,4\Sigma+1) \cup (3,7,\dots,4\delta-1) & \mbox{ if $a$ is even  and } 2b\ge a \\
    (1,5,\dots,4\Sigma+1) \cup (1,5,\dots,4\delta-3) & \mbox{ if $a$ is even  and } 2b< a \\
    (3,7,\dots,4\Sigma+3) \cup (1,5,\dots,4\delta-3) & \mbox{ if $a$ is odd and } 2b\ge a \\
    (3,7,\dots,4\Sigma+3) \cup (3,7,\dots,4\delta-1) & \mbox{ if $a$ is odd and } 2b< a.
\end{cases}$$
\end{enumerate}
%The nilpotent orbit $\OO^\vee(c^\omega(J),\sigma)$ is:
%\begin{itemize}
    %\item If $2b\ge a$, then
    %\[\OO^\vee=\begin{cases}(1,5,\dots,4b+2a+1)\cup (3,7,\dots,4b-2a-1),& a\text{ even},\\
    %(3,7,\dots,4b+2a+1)\cup (1,5,\dots,4b-2a-1), &a\text{ odd};\end{cases}
    %\]
    %\item If $2b<a$, then
    %\[\OO^\vee=\begin{cases}(1,5,\dots,4b+2a+1)\cup (1,5,\dots,2a-4b-3),& a\text{ even},\\
    %(3,7,\dots,4b+2a+1)\cup (3,7,\dots,2a-4b-3), &a\text{ odd}.\end{cases}
    %\]
%\end{itemize}
%The dual orbit should be in all cases
%\[d(\OO^\vee)=(2,2,2,2,4,4,4,4,\dots,2b,2b,2b,2b)\sqcup (1,1,2,2,\dots,a,a).
%\]
%\end{enumerate}

\paragraph{$PSO(2n)$}
\label{sec:typeD}
If $\bfG=PSO(2n)$, then $G^\vee=Spin(2n,\CC)$, and 
\[ Z(G^\vee)=\begin{cases}(\ZZ/2\ZZ)^2, &n\text{ even},\\ \ZZ/4\ZZ,&n\text{ odd}.\end{cases}
\]
Let $\tau$ be the standard diagram automorphism of type $D_n$. Let $\{1,-1\}$ be the kernel of the isogeny $Spin(2n,\CC)\to SO(2n,\CC)$. Write the four characters of $Z(G^\vee)$ as $\Omega=\{1,\eta,\rho,\eta\rho\}$, where $\tau(\eta)=\eta$ and $\eta(-1)=1$. 
%See \cite[Section 7]{FengOpdamSolleveld2021} for diagrams illustrating the action of $\Omega$ on $\tilde\Delta$.

\begin{enumerate}
    \item If $\omega=1$, then $J$ is of the form $D_\ell\times D_t$, where $\ell+t=n$, $\ell=a^2$, $t=b^2$, $a,b$ even non-negative integers, $a\ge b$.
    Let $\delta = a-b,\Sigma = a+b$.
    The nilpotent orbit $\OO^\vee(c^\omega(J),\sigma)$ is parameterized by the partition
%The dual group if $G^\vee=Spin(2n,\CC)$ and $Z_{G^\vee}(s)=(Spin(2p,\CC)\times Spin(2q,\CC))/\langle(-1,-1)\rangle$, where $2p$, $2q$ are squares. The  nilpotent element $n\in Z_{G^\vee}(s)$ is
\begin{equation}
    \lambda = (1,3,\dots,2\delta-1)\cup (1,3,\dots, 2\Sigma-1).
\end{equation}

\item If $\omega=\eta$, then $J$ is of the form $\hphantom{ }^2D_\ell\times \hphantom{ }^2D_t$, where $\ell+t=n$, $\ell=a^2$, $t=b^2$, and $a,b$ are odd positive integers, $a\ge b$. The nilpotent orbit $\OO^\vee(c^\omega(J),\sigma)$ is defined analogously to the $\omega=1$ case.

\item If $\omega=\rho, \eta\rho$, then $J$ can take one of the following two forms:
\begin{enumerate}
\item[(i)] $J$ is of the form $~^2\!A_{t}$, where $t = n-1$ is even, $t=\frac{a(a+1)}2-1$, $a$ is a non-negative integer. This means that $a\equiv 0,3$ (mod $4$). There are four ways to embed $J$ into the affine Dynkin diagram $\widetilde D_n$, two of them are $\rho$-stable, and the other two $\eta\rho$-stable. %The Langlands parameters are $s$ such that $Z_{G^\vee}(s)$ is of type $D_{n/2}\times D_{n/2}$ and the nilpotent element corresponds to a product of $n_+\times n_-$ where $n_+=n_-$ is a partition of type $(1,5,9,\dots)$ or $(3,7,11,\dots)$.
In all cases the nilpotent orbit $\OO^\vee(c^\omega(J),\sigma)$ is parameterized by the partition
\begin{equation}
    \lambda = \begin{cases}
        (3,3,7,7,\dots,2a-1,2a-1),&a\equiv 0 \ (\text{mod }4),\\
        (1,1,5,5,\dots,2a-1,2a-1),&a\equiv 3\  (\text{mod }4).
    \end{cases}
\end{equation}
\item[(ii)] $J$ is of the form $D_\ell~ ^2\! A_t~D_\ell$, where $2\ell+t=n-1$, $t=\frac {a(a+1)}2-1$ and $\ell=b^2$, $a$,$b$ are non-negative integers. %The centralizer $Z_{G^\vee}(s)$ is of type $D_p\times D_q$, with $p>q\ge 0$, $p+q=n$ and $n=n_+\times n_-$, where $n_\pm$ of type $(1,5,9,\dots)$ or $(3,7,11,\dots)$ are constructed as in (\ref{e:extraspecial}), except
%\[\{|m_--m_+|,m_-+m_+\}=\{a+\frac 12, 2b\}.
%\]
%As before, for the explicit $\OO^\vee$, there are four cases depending on the parity of $a$ and whether $a+\frac 12$ is greater than $2b$ or not:
%The nilpotent orbit $\OO^\vee(c^\omega(J),\sigma)$ is as follows:
    Let $a'$ be such that $a=2a'$ if $a$ is even and $a=2a'+1$ if $a$ is odd.
    Let $\Sigma = b+a'$ and
    $$\delta = \begin{cases}
        b-a' & \mbox{ if } 2b> a \\
        a'-b & \mbox{ if } 2b\le a.
    \end{cases}$$
    The nilpotent orbit $\OO^\vee(c^\omega(J),\sigma)$ is parameterized by the partition
    $$\lambda = \begin{cases}
        (3,7,\dots,4\Sigma-1) \cup (1,5,\dots,4\delta-3) & \mbox{ if $a$ is even and } 2b> a \\
        (3,7,\dots,4\Sigma-1) \cup (3,7,\dots,4\delta-1) & \mbox{ if $a$ is even and } 2b\le a \\
        (1,5,\dots,4\Sigma+1) \cup (3,7,\dots,4\delta-5) & \mbox{ if $a$ is odd and } 2b> a \\
        (1,5,\dots,4\Sigma+1) \cup (1,5,\dots,4\delta+1) & \mbox{ if $a$ is odd and } 2b\le a.
    \end{cases}$$
    %
%\begin{itemize}
    %\item If $2b>a+\frac 12$, then
    %\[\OO^\vee=\begin{cases}(3,7,\dots,4b+2a-1)\cup (1,5,\dots,4b-2a-3),& a\text{ even},\\
    %(1,5,\dots,4b+2a-1)\cup (3,7,\dots,4b-2a-3), &a\text{ odd};\end{cases}
    %\]
    %\item If $2b<a+\frac 12$, then
    %\[\OO^\vee=\begin{cases}(3,7,\dots,4b+2a-1)\cup (3,7,\dots,2a-4b-1),& a\text{ even},\\
    %(1,5,\dots,4b+2a-1)\cup (1,5,\dots,2a-4b-1), &a\text{ odd}.\end{cases}
    %\]
    %The dual orbit should be in all cases
   % \[d(\OO^\vee)=(1,1,1,1,3,3,3,3,\dots,2b-1,2b-1,2b-1,2b-1)\sqcup %(1,1,2,2,\dots,a,a).
    %\]
%\end{itemize}
\end{enumerate}
\end{enumerate}

\subsection{Exceptional groups}\label{subsec:exceptionalcuspidal}

\paragraph{$G_2$} If $\bfG = G_2$, then $G^\vee = G_2(\CC)$, and $Z(G^\vee) = \{1\}$.
If $\omega = 1$ then $J$ is of the form $G_2$ and there are 4 choices for $\sigma$ as enumerated in Table \ref{table:cupsidalexceptional}.
In all cases 
$$\OO^\vee(c^\omega(J),\sigma) = G_2(a_1).$$

\paragraph{$F_4$} If $\bfG = F_4$, then $G^\vee = F_4(\CC)$, and $Z(G^\vee) = \{1\}$.
If $\omega = 1$ then $J$ is of the form $F_4$ and there are 7 choices for $\sigma$ as enumerated in Table \ref{table:cupsidalexceptional}.
In all cases 
$$\OO^\vee(c^\omega(J),\sigma) = F_4(a_3).$$

\paragraph{$E_6$} If $\bfG = E_6$, then $G^\vee = E_6(\CC)$, and $Z(G^\vee) = \{1,\zeta,\zeta^2\}$.
\begin{enumerate}
    \item If $\omega = 1$ then $J$ is of the form $E_6$ and there are 2 choices for $\sigma$ as enumerated in Table \ref{table:cupsidalexceptional}.
    In both cases 
$$\OO^\vee(c^\omega(J),\sigma) = D_4(a_1).$$
    \item If $\omega\in\{\zeta,\zeta^2\}$ then $J$ is of the form $~^3D_4$ and $\sigma = D_4[1]$ or $D_4[-1]$.
    In both cases 
$$\OO^\vee(c^\omega(J),\sigma) = E_6(a_3).$$
\end{enumerate}

\paragraph{$E_7$} If $\bfG = E_7$, then $G^\vee = E_7(\CC)$, and $Z(G^\vee) = \{1,-1\}$.
\begin{enumerate}
    \item If $\omega = 1$ then $J$ is of the form $E_7$ and there are 2 choices for $\sigma$ as enumerated in Table \ref{table:cupsidalexceptional}.
    In both cases 
$$\OO^\vee(c^\omega(J),\sigma) = A_4+A_1.$$
    \item If $\omega=-1$, then  $J$ is of the form $~^2\!E_6$. There are three cuspidal unipotent representations afforded by $J$: $^2\!E_6[1]$,  $^2\!E_6[\theta]$,  $^2\!E_6[\theta^2]$. 
    In all cases 
    $$\OO^\vee(c^\omega(J),\sigma) = E_7(a_5).$$
\end{enumerate}

\paragraph{$E_8$} If $\bfG = E_8$, then $G^\vee = E_8(\CC)$, and $Z(G^\vee) = \{1\}$.
If $\omega = 1$ then $J$ is of the form $E_8$ and there are 13 choices for $\sigma$ as enumerated in Table \ref{table:cupsidalexceptional}.
In all cases 
$$\OO^\vee(c^\omega(J),\sigma) = E_8(a_7).$$

\section{Proof of Proposition \ref{prop:main}}\label{sec:proof}

In this section, we will prove Proposition \ref{prop:main}. 

\subsection{Classical groups}
In each case we show that 
$$\overline{\mathbb L}(J,\WF(\sigma)) = d_A(\lambda,1)$$
where $\lambda$ is the partition parameterizing $\OO^\vee(c^\omega(J),\sigma)$.
We will use the machinery of \cite[Section 3.4]{Acharduality} to prove this equality.

\paragraph{$PGL(n)$}
Let $\omega\in \Omega\simeq C_n$ be of order $n$.
Let $J=\emptyset$.      
Then $\sigma = triv$, $\WF(\sigma) = \{0\}$, and $\OO^\vee(c^\omega(\emptyset),\sigma)$ is the principal orbit $\OO^\vee_{prin}$.
We need to show that 
$$\overline{\mathbb L}(\emptyset,\{0\}) = d_A(\OO^\vee_{prin},1).$$
But both sides are equal to $(\{0\},1)$ and so we have equality.

\paragraph{$SO(2n+1)$} 
Consider the cases $\omega = 1,-1$ simultaneously.
Fix integers $a,b$ as in section \ref{sec:typeB} to fix $J$ and hence $\sigma$.
By section \ref{sec:unipcusp}
$$\WF(\sigma) = (1,1,3,3,\dots,2a-1,2a-1)\times(1,1,3,3,\dots,2b-1,2b-1,2b+1).$$
Let $\delta,\Sigma,\lambda$ be as in section \ref{sec:typeB}.
We have that
\begin{align*}
    \lambda^t &= (\delta,\delta,\delta-1,\delta-1,\dots,1,1)\vee (\Sigma,\Sigma,\Sigma-1,\Sigma-1,\dots,1,1) \\
    &= \begin{cases}
        (2b,2b,2b-2,2b-2,\dots,2a,2a,2a-1,2a-1,\dots,1,1) & \mbox{ if } b\ge a \\
        (2a-1,2a-1,2a-3,2a-3,\dots,2b+1,2b+1,2b,2b,\dots,1,1) & \mbox{ if } a> b
    \end{cases}
\end{align*}
so $\pi(\lambda) = \emptyset$.
We also have
\begin{align*}
    d(\lambda) = (2b+1,2b-1,2b-1,\dots,1,1) \cup (2a-1,2a-1,\dots,1,1).
\end{align*}
Since $(1,1,3,3,\dots,2a-1,2a-1)$ only has parts with even multiplicity, 
$$\overline{\mathbb L}(J,\WF(\sigma)) = \hphantom{ }^{\langle (1,1,3,3,\dots,2a-1,2a-1)\rangle}d(\lambda) = \hphantom{ }^{\langle \emptyset \rangle}d(\lambda) = \hphantom{ }^{\langle\pi(\lambda)\rangle}d(\lambda) = d_A(\lambda,1)$$
where $\pi(\lambda)$ is the subpartition of $\lambda^t$ defined by Achar in (\cite{Acharduality}, Equation 8).

\paragraph{$PSp(2n)$} 
\begin{enumerate}
    \item Let $\omega = 1$.
Fix integers $a,b$ as in section \ref{sec:typeC} (1) to fix $J$ and hence $\sigma$.
By section \ref{sec:unipcusp}
$$\WF(\sigma) = (2,2,4,4,\dots,2a,2a)\times(2,2,4,4,\dots,2b,2b).$$
Let $\delta,\Sigma,\lambda$ be as in section \ref{sec:typeC} (1).
We have that
\begin{align*}
    \lambda^t 
    &= 
        (\delta,\delta-1,\delta-1,\dots,1,1)\vee (\Sigma+1,\Sigma,\Sigma,\dots,1,1) \\
    &= (2a+1,2a-1,2a-1,\dots,2b+1,2b+1,2b,2b,\dots,1,1)
\end{align*}
so $\pi(\lambda) = \emptyset$.
We also have
\begin{align*}
    d(\lambda) = (2a,2a,\dots,2b+2,2b+2,2b,2b,2b,2b,\dots,2,2,2,2).
\end{align*}
Since $(2,2,4,4,\dots,2a,2a)$ only has parts with even multiplicity, 
$$\overline{\mathbb L}(J,\WF(\sigma)) = \hphantom{ }^{\langle (2,2,4,4,\dots,2a,2a)\rangle}d(\lambda) = \hphantom{ }^{\langle \emptyset \rangle}d(\lambda) = \hphantom{ }^{\langle\pi(\lambda)\rangle}d(\lambda) = d_A(\lambda,1).$$

    \item Let $\omega = -1$.
    Fix integers $a,b$ as in section \ref{sec:typeC} (2) to fix $J$ and hence $\sigma$.
    Then 
    $$\WF(\sigma) = (2,2,4,4,\dots,2b,2b) \times (1,2,\dots,a) \times (2,2,4,4,\dots,2b,2b).$$
    Let $\delta,\Sigma,\lambda$ be as in section \ref{sec:typeC} (2).
    We have that
    \begin{align}
        \lambda^t &= \begin{cases}
            (\Sigma+1,\Sigma^4,\dots,1^4) \vee (\delta^3,(\delta-1)^4,\dots,1^4) & \mbox{ if $a$ is even  and } 2b\ge a \\
            (\Sigma+1,\Sigma^4,\dots,1^4) \vee (\delta,(\delta-1)^4,\dots,1^4) & \mbox{ if $a$ is even  and } 2b< a \\
            ((\Sigma+1)^3,\Sigma^4,\dots,1^4) \vee (\delta,(\delta-1)^4,\dots,1^4) & \mbox{ if $a$ is odd and } 2b\ge a \\
            ((\Sigma+1)^3,\Sigma^4,\dots,1^4) \vee (\delta^3,(\delta-1)^4,\dots,1^4) & \mbox{ if $a$ is odd and } 2b< a
        \end{cases} \\
        &=
        \begin{cases}
            (2b+1,(2b)^2,\dots,(a+1)^2,a^4,\dots,1^4) & \mbox{ if $a$ is even  and } 2b\ge a \\
            (a+1,(a-1)^4,\dots,(2b+1)^4,(2b)^4,\dots,1^4) & \mbox{ if $a$ is even  and } 2b< a \\
            (2b+1,(2b)^2,\dots,(a+1)^2,a^4,\dots,1^4) & \mbox{ if $a$ is odd and } 2b\ge a \\
            (a^3,(a-2)^4,\dots,(2b+1)^4,(2b)^4,\dots,1^4) & \mbox{ if $a$ is odd and } 2b< a.
        \end{cases}
    \end{align}
    Thus $\pi(\lambda) = \emptyset$ since all even parts of $\lambda^t$ have even multiplicity.
    Moreover 
    $$d(\lambda) = (2,2,4,4,\dots,2b,2b)\cup (1,1,2,2,\dots,a,a) \cup (2,2,4,4,\dots,2b,2b)$$
    in all cases.
    Thus
    \begin{align*}
        \overline{\mathbb L}(J,\WF(\sigma)) &= \overline{\mathbb L}(\tilde J,(2,2,\dots,2b,2b)\times (1,1,\dots,a,a)\cup(2,2,\dots,2b,2b)) \\
        &= \hphantom{ }^{\langle (2,2,4,4,\dots,2b,2b)\rangle}d(\lambda) = \hphantom{ }^{\langle \emptyset \rangle}d(\lambda) = \hphantom{ }^{\langle\pi(\lambda)\rangle}d(\lambda) = d_A(\lambda,1)
    \end{align*}
    where $\tilde J = C_l \times C_{t+1+l}$.
\end{enumerate}

\paragraph{$PSO(2n)$}
\begin{enumerate}
    \item Let $\omega \in \{1,\eta\}$.
    Fix integers $a,b$ as in section \ref{sec:typeD} (1), (2) to fix $J$ and hence $\sigma$.
By section \ref{sec:unipcusp}
$$\WF(\sigma) = (1,1,3,3,\dots,2a-1,2a-1)\times(1,1,3,3,\dots,2b-1,2b-1).$$
    Let $\delta,\Sigma,\lambda$ be as in section \ref{sec:typeD} (1).
We have that
\begin{align*}
    \lambda^t &= (\delta,\delta-1,\delta-1,\dots,1,1)\vee (\Sigma,\Sigma-1,\Sigma-1,\dots,1,1) \\
    &= (2a,2a-2,2a-2,\dots,2b,2b,2b-1,2b-1,\dots,1,1)
\end{align*}
so $\pi(\lambda) = \emptyset$ since all odd parts have even multiplicity.
We also have
\begin{align*}
    d(\lambda) = (2a-1,2a-1,\dots,2b+1,2b+1,2b-1,2b-1,2b-1,2b-1,\dots,1,1,1,1).
\end{align*}
Since $(1,1,3,3,\dots,2a-1,2a-1)$ only has parts with even multiplicity, 
$$\overline{\mathbb L}(J,\WF(\sigma)) = \hphantom{ }^{\langle (1,1,3,3,\dots,2a-1,2a-1)\rangle}d(\lambda) = \hphantom{ }^{\langle \emptyset \rangle}d(\lambda) = \hphantom{ }^{\pi(\lambda)}d(\lambda) = d_A(\lambda,1).$$

    \item Let $\omega \in \{\rho,\eta\rho\}$.
    We will treat the cases (i) and (ii) simultaneously.
    Fix integers $a,b$ as in section \ref{sec:typeD} (3) (ii) to fix $J$ and hence $\sigma$ (we treat (i) as the case with $b=0$).
    By section \ref{sec:unipcusp}
    $$\WF(\sigma) = (1,1,3,3,\dots,2b-1,2b-1) \times (1,2,\dots,a) \times (1,1,3,3,\dots,2b-1,2b-1).$$
    Let $\delta,\Sigma,\lambda$ be as in section \ref{sec:typeD} (3) (ii).
    We have that
    \begin{align}
        \lambda^t &= \begin{cases}
            (\Sigma^3,(\Sigma-1)^4,\dots,1^4) \vee (\delta,(\delta-1)^4,\dots,1^4) & \mbox{ if $a$ is even  and } 2b\le a \\
            (\Sigma^3,(\Sigma-1)^4,\dots,1^4) \vee (\delta^3,(\delta-1)^4,\dots,1^4) & \mbox{ if $a$ is even  and } 2b\le a \\
            (\Sigma+1,\Sigma^4,\dots,1^4) \vee (\delta,(\delta-1)^4,\dots,1^4) & \mbox{ if $a$ is odd and } 2b< a \\
            ((\Sigma+1)^3,\Sigma^4,\dots,1^4) \vee (\delta^3,(\delta-1)^4,\dots,1^4) & \mbox{ if $a$ is odd and } 2b< a
        \end{cases} \\
        &=
        \begin{cases}
            (2b+1,(2b)^2,\dots,(a+1)^2,a^4,\dots,1^4) & \mbox{ if $a$ is even  and } 2b\ge a \\
            (a+1,(a-1)^4,\dots,(2b+1)^4,(2b)^4,\dots,1^4) & \mbox{ if $a$ is even  and } 2b< a \\
            (2b+1,(2b)^2,\dots,(a+1)^2,a^4,\dots,1^4) & \mbox{ if $a$ is odd and } 2b\ge a \\
            (a^3,(a-2)^4,\dots,(2b+1)^4,(2b)^4,\dots,1^4) & \mbox{ if $a$ is odd and } 2b< a.
        \end{cases}
    \end{align}
    Thus $\pi(\lambda) = \emptyset$ since all even parts of $\lambda^t$ have even multiplicity.
    Moreover 
    $$d(\lambda) = (2,2,4,4,\dots,2b,2b)\cup (1,1,2,2,\dots,a,a) \cup (2,2,4,4,\dots,2b,2b)$$
    in all cases.
    Thus
    \begin{align*}
        \overline{\mathbb L}(J,\WF(\sigma)) &= \overline{\mathbb L}(\tilde J,(1,1,\dots,2b-1,2b-1)\times (1,1,\dots,a,a)\cup(1,1,\dots,2b-1,2b-1)) \\
        &= \hphantom{ }^{\langle (2,2,4,4,\dots,2b,2b)\rangle}d(\lambda) = \hphantom{ }^{\langle \emptyset \rangle}d(\lambda) = \hphantom{ }^{\langle\pi(\lambda)\rangle}d(\lambda) = d_A(\lambda,1)
    \end{align*}
    where $\tilde J = D_l \times D_{t+1+l}$.
\end{enumerate}

\subsection{Exceptional groups}

\paragraph{Split forms} Suppose that $\mathbf{G}$ is split, of exceptional type, and that $\omega = 1$. As can be seen in Section \ref{subsec:exceptionalcuspidal}, $J$ is always equal to $\Delta$. Thus,
$$\overline{\mathbb{L}}(J,\WF(\sigma))=(\WF(\sigma),1).$$
On the other hand, the nilpotent orbit $\OO^\vee := \OO^{\vee}(c^\omega(J),\sigma)$ is always special. Thus, 
$$d_A(\OO^{\vee},1) = (d(\OO^{\vee}),1)$$
by the general properties of $d_A$, see \cite[Section 3]{Acharduality}. So for Proposition \ref{prop:main} it suffices to show that
$$\WF(\sigma) = d(\OO^{\vee})$$
for all $\sigma$.
This follows by comparing the orbits in Table \ref{ta:exc-fin} and in section \ref{subsec:exceptionalcuspidal}.

\paragraph{Non-split forms of $E_6$}
Suppose $\bfG$ is of type $E_6$ and $\omega \in \{\zeta,\zeta^2\}$.
Then $J$ is of the form $\hphantom{ }^3D_4$, and $\WF(\sigma) = (1,1,3,3)$ for both $\sigma=D_4[1]$ and $\sigma = D_4[-1]$.
The orbit $(1,1,3,3)$ is the orbit $A_2$ in Bala-Carter notation.
Thus we need to show that 
$$\overline{\mathbb L}(J,(1,1,3,3)) = d_A(E_6(a_3),1).$$
We note that $E_6(a_3)$ is special and $d(E_6(a_3)) = A_2$ so we must show that
$$\overline{\mathbb L}(J,A_2) = (A_2,1).$$
Since $J\subseteq \Delta$ this follows from \cite[Proposition 2.30]{okada2021wavefront}.

\paragraph{Non-split forms of $E_7$}
Suppose $\mathbf{G}$ is of type $E_7$ and $\omega = -1$.
Then $J$ is of the form $\hphantom{ }^2E_6$, and $\WF(\sigma) = D_4(a_1)$ for all possible $\sigma$.
Thus we need to show that 
$$\overline{\mathbb L}(J,D_4(a_1)) = d_A(E_7(a_5),1).$$
We note that $E_7(a_5)$ is special and $d(E_7(a_5)) = D_4(a_1)$ so we must show that
$$\overline{\mathbb L}(J,D_4(a_1)) = (D_4(a_1),1).$$
Since $J\subseteq \Delta$ this follows from \cite[Proposition 2.30]{okada2021wavefront}.

\begin{sloppypar} \printbibliography[title={References}] \end{sloppypar}

\end{document}